\newtheorem{theorem}{Theorem}
\newtheorem{corollary}[theorem]{Corollary}
\newtheorem{lemma}[theorem]{Lemma}
\newtheorem{proposition}[theorem]{Proposition}
\newtheorem{claim}[theorem]{Claim}
\theoremstyle{definition}
\newtheorem{defin}[theorem]{Definition}
\titleformat{\section}[hang]{\scshape\large\bfseries\filcenter}{\S\thesection}{4pt}{}
\titleformat{\subsection}[hang]{\scshape\bfseries}{\thesubsection.}{4pt}{}
\newcommand{\dhor}{\mathsf{D}_{\mathsf{hor}}}
\newcommand{\dver}{\mathsf{D}_{\mathsf{ver}}}
\newcommand{\on}[1]{
	\operatorname{#1}
}
\newcommand{\tightoverset}[2]{
  \mathop{#2}\limits^{\vbox to -.5ex{\kern-1.15ex\hbox{$#1$}\vss}}}
\newcommand\restr[2]{{
  \left.\kern-\nulldelimiterspace 
  #1 
  \vphantom{\big|} 
  \right|_{#2} 
}}
\newcommand{\subalign}[1]{%
  \vcenter{%
    \Let@ \restore@math@cr \default@tag
    \baselineskip\fontdimen10 \scriptfont\tw@
    \advance\baselineskip\fontdimen12 \scriptfont\tw@
    \lineskip\thr@@\fontdimen8 \scriptfont\thr@@
    \lineskiplimit\lineskip
    \ialign{\hfil$\m@th\scriptstyle##$&$\m@th\scriptstyle{}##$\hfil\crcr
      #1\crcr
    }%
  }%
}
\newcommand\blfootnote[1]{%
  \begingroup
  \renewcommand\thefootnote{}\footnote{#1}%
  \addtocounter{footnote}{-1}%
  \endgroup
}
\newcommand{\exx}{
  \mathop{
    \mathchoice{\vcenter{\hbox{\larger[4]$\mathbb{E}$}}}
               {\kern0pt\mathbb{E}}
               {\kern0pt\mathbb{E}}
               {\kern0pt\mathbb{E}}
  }\displaylimits
}
\newcommand*\bcdot{\mathpalette\bigcdot@{0.5}}
\newcommand*\bigcdot@[2]{\mathbin{\vcenter{\hbox{\scalebox{#2}{$\m@th#1\bullet$}}}}}
\def\blfootnote{\gdef\@thefnmark{}\@footnotetext}
\newcommand\id{\mathbbm{1}}
\begin{document}
\begin{center}\Large\noindent{\bfseries{\scshape A note on transverse sets and bilinear varieties}}\\[24pt]\normalsize\noindent{\scshape Luka Mili\'cevi\'c\dag}\\[6pt]
\end{center}
\blfootnote{\noindent\dag\ Mathematical Institute of the Serbian Academy of Sciences and Arts\\\phantom{\dag\ }Email: luka.milicevic@turing.mi.sanu.ac.rs}

\footnotesize
\begin{changemargin}{1in}{1in}
\centerline{\sc{\textbf{Abstract}}}
\phantom{a}\hspace{12pt}~Let $G$ and $H$ be finite-dimensional vector spaces over $\mathbb{F}_p$. A subset $A \subseteq G \times H$ is said to be \emph{transverse} if all of its rows $\{x \in G \colon (x,y) \in A\}$, $y \in H$, are subspaces of $G$ and all of its columns $\{y \in H \colon (x,y) \in A\}$, $x \in G$, are subspaces of $H$. As a corollary of a bilinear version of Bogolyubov argument, Gowers and the author proved that dense transverse sets contain bilinear varieties of bounded codimension. In this paper, we provide a direct combinatorial proof of this fact. In particular, we improve the bounds and evade the use of Fourier analysis and Freiman's theorem and its variants. 
\end{changemargin}
\normalsize
\section{Introduction}

In this paper, we fix a prime field $\mathbb{F}_p$ and consider finite-dimensional vector spaces over $\mathbb{F}_p$, usually denoted by $G$ or $H$. A classical result, due to Bogolyubov, adapted to the finite vector space setting, reads as follows.

\begin{theorem}[Bogolyubov argument~\cite{BogolyubovPaper}]Suppose that $G$ is a finite-dimensional vector space over $\mathbb{F}_p$. Let $A \subseteq G$ be a set of size at least $\delta |G|$. Then $2A - 2A = \{a_1 + a_2 - a_3 - a_4 \colon a_1, a_2, a_3, a_4 \in A\}$ contains a subspace of $G$ of codimension $O_\delta(1)$.\end{theorem}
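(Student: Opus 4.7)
The plan is to follow the classical Fourier analytic proof. The key idea is to find an explicit nonnegative function supported on $2A-2A$ and show that it is strictly positive on a large subspace. Specifically, I consider the convolution
\[g = \id_A \ast \id_A \ast \id_{-A} \ast \id_{-A},\]
which is supported on $2A-2A$ and is a nonnegative multiple of the ``$4$-fold additive energy'' function. Here I work with the normalization $\widehat{f}(r) = \ex_{x \in G} f(x)\, e_p(-r\cdot x)$, so that by the convolution identity and Fourier inversion,
\[g(x) = \sum_{r \in \widehat{G}} |\widehat{\id_A}(r)|^4\, e_p(r\cdot x).\]

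Next I would split the frequencies into a ``large spectrum'' and its complement. Setting $\mathrm{Spec}_\eta(A) = \{r \neq 0 : |\widehat{\id_A}(r)| \geq \eta\delta\}$, Parseval gives
\[\eta^2\delta^2 \cdot |\mathrm{Spec}_\eta(A)| \leq \sum_r |\widehat{\id_A}(r)|^2 = \ex_x \id_A(x) = \delta,\]
so $|\mathrm{Spec}_\eta(A)| \leq \eta^{-2}\delta^{-1}$. Let $V$ be the annihilator $\{x \in G : r\cdot x = 0 \text{ for all } r \in \mathrm{Spec}_\eta(A)\}$; it is a subspace of $G$ of codimension at most $\eta^{-2}\delta^{-1}$.

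For $x \in V$, I would estimate $g(x)$ by isolating the $r=0$ term, which contributes $\delta^4$, and noting that the contributions from $r \in \mathrm{Spec}_\eta(A)$ are real and nonnegative because $e_p(r\cdot x) = 1$ on $V$ and each summand equals $|\widehat{\id_A}(r)|^4 \geq 0$. The remaining frequencies can be bounded in absolute value by
\[\sum_{r \notin \mathrm{Spec}_\eta(A) \cup \{0\}} |\widehat{\id_A}(r)|^4 \leq \max_{r \notin \mathrm{Spec}_\eta(A)} |\widehat{\id_A}(r)|^2 \cdot \sum_r |\widehat{\id_A}(r)|^2 \leq \eta^2 \delta^3.\]
Choosing $\eta$ to be a small enough constant multiple of $\sqrt{\delta}$, say $\eta = \tfrac{1}{2}\sqrt{\delta}$, guarantees $g(x) \geq \delta^4 - \eta^2\delta^3 > 0$ for every $x \in V$, so $V \subseteq 2A-2A$, with codimension bounded by a polynomial in $\delta^{-1}$.

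The main technical point is keeping the threshold $\eta$ small enough that the ``off-spectrum'' error cannot wipe out the $r=0$ main term, while simultaneously keeping it large enough that $\mathrm{Spec}_\eta(A)$ stays bounded by $O_\delta(1)$; this is why the quadratic trade-off $\eta^2 \sim \delta$ is what one wants. No other obstacle arises, and indeed the whole argument is standard; the novelty in the present paper is to develop a robust, Fourier-free combinatorial replacement that carries over to the bilinear transverse setting.
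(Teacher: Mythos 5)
The paper states this theorem only as background, citing Bogolyubov's original paper, and does not supply a proof of its own; the entire thrust of the paper is to prove a bilinear analogue by purely combinatorial means, deliberately avoiding the Fourier machinery you invoke here. Your proposal is nonetheless a correct rendering of the standard Fourier-analytic proof. With $g=\id_A\ast\id_A\ast\id_{-A}\ast\id_{-A}$, the identity $g(x)=\sum_r|\widehat{\id_A}(r)|^4 e_p(r\cdot x)$ is right under your normalization, the $r=0$ term contributes $\delta^4$, the large-spectrum terms are nonnegative on the annihilator $V$ of $\mathrm{Spec}_\eta(A)$ since $e_p(r\cdot x)=1$ there, and the off-spectrum error is at most $\max_{r\notin\mathrm{Spec}_\eta(A)}|\widehat{\id_A}(r)|^2\cdot\sum_r|\widehat{\id_A}(r)|^2\le(\eta\delta)^2\cdot\delta=\eta^2\delta^3$. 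Choosing $\eta=\tfrac12\sqrt{\delta}$ gives $g(x)\ge\tfrac34\delta^4>0$ on $V$, hence $V\subseteq 2A-2A$, while Parseval gives $\on{codim} V\le|\mathrm{Spec}_\eta(A)|\le\eta^{-2}\delta^{-1}=4\delta^{-2}=O_\delta(1)$. So the argument is sound; it is simply orthogonal to the elementary route the paper develops for the transverse/bilinear setting, where no Fourier analysis or approximate-homomorphism machinery of this kind is used.
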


Motivated by applications in number theory~\cite{Mobius} and higher order Fourier analysis~\cite{U4paper}, Bienvenu and L\^e~\cite{BienvenuLe}, and Gowers and the author~\cite{BilinearBog}, respectively, independently proved a bilinear generalization of Bogolyubov argument. It states that if we are given a dense set $A$ inside $G \times H$, then replacing rows or columns by their difference sets several times, we may find bilinear structure in the resulting set. To be precise, let us write $\dhor A$ for the \emph{horizontal difference set} of $A$ which is defined as $\dhor A = \{(x_1 - x_2, y) \colon (x_1, y), (x_2, y) \in A\}$. Similarly, we write $\dver A$ for the \emph{vertical difference set} of $A$ which is defined as $\dver A = \{(x, y_1 - y_2) \colon (x, y_1), (x, y_2) \in A\}$.

\begin{theorem}[Bilinear Bogolyubov argument~\cite{BienvenuLe,BilinearBog}]\label{bilbogintro}Let $A \subseteq G \times H$ be a set of size at least $\delta |G||H|$. Then
\[\dver\dver\dhor\dhor\dver A\]
contains a set of the form $\{(x,y) \in U \times V \colon \beta(x,y) = 0\}$, where $U \leq G$ and $V \leq H$ are subspaces of codimensions $O_\delta(1)$ and $\beta \colon G \times H \to \mathbb{F}_p^r$ is a bilinear map, where $r = O_\delta(1)$.\end{theorem}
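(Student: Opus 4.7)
The approach is to iterate the classical (single-variable) Bogolyubov argument---first on rows and then on columns---producing a ``quasi-transverse'' set in which every relevant row and column contains a subspace of bounded codimension, and then to extract a bilinear variety from this structure. The first step is to verify that each of the five difference operations preserves density up to a power of~$\delta$: by averaging, an $\Omega(\delta)$-fraction of rows (or columns) are $\Omega(\delta)$-dense, and the difference set of such a row (or column) is at least as large as the row itself, so the set $B := \dhor\dhor\dver A$ and subsequently $C := \dver\dver B$ both have density $\delta^{O(1)}$ in $G \times H$.

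For each $y$ in a set $Y \subseteq H$ of density $\delta^{O(1)}$, the row $(\dver A)_y$ is $\delta^{O(1)}$-dense in $G$, so the classical Bogolyubov theorem yields a subspace $V_y \leq G$ of codimension $k = O_\delta(1)$ contained in $2(\dver A)_y - 2(\dver A)_y = B_y$. Double counting then gives $\sum_{y \in Y} |V_y| \geq |Y|\, p^{-k}\, |G|$, and reverse Markov produces a set $X \subseteq G$ of density $\delta^{O(1)}$ such that $|B^x| \geq \delta^{O(1)} |H|$ for every $x \in X$. A second application of classical Bogolyubov, now to the column $B^x$ for $x \in X$, yields a subspace $W_x \leq H$ of codimension $O_\delta(1)$ contained in $2B^x - 2B^x = C^x$. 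Moreover, every nonempty column of $\dver A$ contains $0$, from which one checks that $B \subseteq \dver B \subseteq C$; in particular $V_y \subseteq B_y \subseteq C_y$ for every $y \in Y$, so the row subspaces persist in~$C$ as well.

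The final task is to upgrade this ``rows-and-columns-contain-subspaces'' picture to a genuine bilinear variety: one must produce global subspaces $U \leq G$ and $V \leq H$ of codimension $O_\delta(1)$ and a bilinear map $\beta \colon G \times H \to \mathbb{F}_p^r$ with $\{(x,y) \in U \times V : \beta(x,y) = 0\} \subseteq C$. I expect this to be the main obstacle. The Bienvenu--L\^e / Gowers--Mili\'cevi\'c approach is to parametrize each $V_y$ by the large Fourier coefficients of $1_{(\dver A)_y}$, use Plancherel together with Cauchy--Schwarz-type inequalities to expose additive structure in the resulting set of defining functionals, and then invoke Freiman's theorem in $G^{*} \oplus H$ to linearize these functionals as a function of~$y$, thereby producing~$\beta$. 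The present paper, as promised in the abstract, replaces this Fourier-and-Freiman step by a direct combinatorial argument on transverse sets, both avoiding heavy machinery and sharpening the quantitative dependence on~$\delta$.
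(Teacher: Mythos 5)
Your proposal targets Theorem~\ref{bilbogintro}, but this theorem is not actually proved in the present paper: it is stated as a known result from Bienvenu--L\^e and Gowers--Mili\'cevi\'c, and the author deliberately circumvents it. The paper's own argument is the proof of Theorem~\ref{bilinearSetsStructure}, a direct, quantitatively sharper proof that dense transverse sets contain bilinear varieties, which does not pass through the full bilinear Bogolyubov theorem at all. That argument works by passing to orthogonal complements of columns to obtain a \emph{linear system of subspaces} satisfying $V_{x_1+x_2}\subseteq V_{x_1}+V_{x_2}$, proving a dependent-random-choice regularity lemma (Lemma~\ref{reglemma}) that extracts a quasirandom restriction, and then constructing the bilinear map explicitly via Lemmas~\ref{addquadsubspaces}, \ref{indepsubspace} and \ref{elemnearhomm}. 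So there is no proof of the quoted statement in this paper to compare against.

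Read as a sketch of the original proof of the bilinear Bogolyubov theorem, your preliminary reductions are largely correct: iterating classical Bogolyubov row-wise and then column-wise does put $O_\delta(1)$-codimensional subspaces inside a dense set of rows and columns, density is preserved up to powers of $\delta$, and your monotonicity claim $B\subseteq\dver B$ holds because $(\dver A)_{\bcdot y}\subseteq(\dver A)_{\bcdot 0}$ for every $y$, hence $B_{\bcdot y}\subseteq B_{\bcdot 0}$, so $(x,y)\in B$ forces $(x,0)\in B$. The genuine gap is that you stop exactly where the proof begins in earnest. Finding, for each good $y$, \emph{some} subspace $V_y$ of bounded codimension inside the $y$-row is the easy part; the content of the theorem is that one can choose a single ambient pair $U\times V$ and a single bilinear map $\beta$ so that the fibre $\{x\in U:\beta(x,y)=0\}$ sits inside the $y$-row for every $y\in V$. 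Your proposal acknowledges this linearization step (``I expect this to be the main obstacle'') and then simply refers it to the Fourier-analytic and Freiman-theoretic machinery of the cited papers. A blind proof that outsources its central step to the very references it is meant to replace has not proved the statement; nor does it reflect the approach of this paper, which was written precisely to avoid that step.
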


We refer to the sets of the form $\{(x,y) \in U \times V \colon \beta(x,y) = 0\}$, like the ones appearing in the conclusion above, as \emph{bilinear varieties}. We also refer to the quantity $\on{codim}_G U + \on{codim}_H V + r$ as the \emph{codimension} of the bilinear variety.\\
\indent The bounds in Theorem~\ref{bilbogintro} were later improved by Hosseini and Lovett~\cite{HosseiniLovett} to nearly optimal ones, namely $\on{codim}_G U, \on{codim}_H V, r \leq O(\log^{80}(2\delta^{-1}))$, at the small cost of considering a slightly longer sequence of directional difference maps.\\

\indent In~\cite{BilinearBog}, it was observed that if a dense set $A$ is invariant under maps $\dver$ and $\dhor$ then Theorem~\ref{bilbogintro} implies that $A$ contains a large bilinear variety of small codimension. Such sets were named \emph{transverse} by Bienvenu, Gonz\'alez-S\'anchez and Mart\'inez~\cite{TransverseSets}.

\begin{defin}A subset $A \subseteq G \times H$ is said to be \emph{transverse} if $\{y \in H \colon (x,y) \in A\}$ is a non-empty\footnote{The restriction that the subspaces are non-empty is a minor one, as if we allowed empty subspaces, we could have passed to product $\{x \in G \colon (x,0) \in A\} \times \{y \in H \colon (0,y) \in A\}$.} subspace of $H$ for each $x \in G$ and $\{x \in G \colon (x,y) \in A\}$ is a non-empty subspace of $G$ for each $y \in H$.\end{defin}

In this language, Theorem~\ref{bilbogintro}, with bounds due to Hosseini and Lovett, implies the following corollary.

\begin{corollary}\label{transverseCor}Let $A \subseteq G \times H$ be a transverse set of size at least $\delta |G||H|$. Then $A$ contains a bilinear variety of codimension $O(\log^{80}(2\delta^{-1}))$.\end{corollary}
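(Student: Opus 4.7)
My plan is to obtain Corollary~\ref{transverseCor} as an immediate consequence of Theorem~\ref{bilbogintro} (with the Hosseini--Lovett bounds), exploiting the fact that transverse sets are fixed points of the two differencing operators.

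First, I would record the invariance. Fix $y \in H$ and let $R_y = \{x \in G \colon (x,y) \in A\}$. By the transversality hypothesis, $R_y$ is a nonempty subspace, hence closed under differences, so $R_y - R_y = R_y$. Running over all $y \in H$, this gives $\dhor A = A$, and by the symmetric argument on columns, $\dver A = A$. Consequently, any finite composition of $\dhor$'s and $\dver$'s applied to $A$ also equals $A$; in particular, $\dver\dver\dhor\dhor\dver A = A$.

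Second, I would apply Theorem~\ref{bilbogintro} to $A$, invoking the improved bounds of Hosseini and Lovett~\cite{HosseiniLovett}. The conclusion asserts that $\dver\dver\dhor\dhor\dver A$ contains a bilinear variety $\{(x,y) \in U \times V \colon \beta(x,y) = 0\}$ with $\on{codim}_G U, \on{codim}_H V, r = O(\log^{80}(2\delta^{-1}))$. By the invariance established above, this iterated difference set is simply $A$, so $A$ itself contains the desired bilinear variety. I do not anticipate any real obstacle along this route: the entire difficulty is absorbed into the bilinear Bogolyubov theorem, and the corollary is essentially the observation that for transverse sets one pays no additional density or structural cost when passing to the iterated horizontal and vertical difference sets.
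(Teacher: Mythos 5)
Your proposal is correct and is exactly the argument the paper has in mind: transverse sets are fixed points of $\dhor$ and $\dver$ (and hence of any finite composition of them), so the Hosseini--Lovett form of Theorem~\ref{bilbogintro} yields a bilinear variety inside $\dver\dver\dhor\dhor\dver A = A$ itself, which is precisely the observation from~\cite{BilinearBog} that the paper cites. The only small imprecision is that the Hosseini--Lovett improvement attaches to a somewhat longer chain of differencing maps than $\dver\dver\dhor\dhor\dver$, but your remark that every finite composition fixes $A$ already covers that.
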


\indent In~\cite{TransverseSets}, Bienvenu, Gonz\'alez-S\'anchez and Mart\'inez considered the problem of understanding the relationship between transverse sets and bilinear varieties. Interestingly, they constructed examples of transitive sets that are not bilinear varieties, indicating that some subtleties are likely to arise in the proof of Corollary~\ref{transverseCor}.\\
\indent Despite the significant algebraic structure present in the assumptions, the only known proof of Corollary~\ref{transverseCor} proceeds via bilinear Bogolyubov argument, which is capable of generating algebraic structure starting from sets that are merely dense. Here we provide a different, more direct proof of Theorem~\ref{transverseCor}, giving improved bounds. Our proof also identifies the relevant bilinear map combinatorially, rather than by using Fourier analysis or Freiman's theorem and related results about approximate homomorphisms. The main result of this paper is the following.

\begin{theorem}\label{bilinearSetsStructure}Let $A \subseteq G \times H$ be a transverse set of size at least $\delta |G||H|$. Then $A$ contains a set of the form $\{(x,y) \in U \times V \colon \beta(x,y) = 0\}$, where $U \leq G$ and $V \leq H$ are subspaces and $\beta \colon G \times H \to \mathbb{F}_p^r$ is a bilinear map, obeying the bounds $\on{codim}_G U = O(\log_p^3 \delta^{-1}), \on{codim}_H V = O(\log_p^2 \delta^{-1})$ and $r = O(\log_p \delta^{-1})$.\end{theorem}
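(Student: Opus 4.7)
The plan is to identify the triple $(U,V,\beta)$ directly and combinatorially, exploiting the key sub-additivity
\[
R_{x_1+x_2}^\perp \;\leq\; R_{x_1}^\perp+R_{x_2}^\perp \quad\text{in } H^*,
\]
which holds because $R_{x_1+x_2}\supseteq R_{x_1}\cap R_{x_2}$: if $y\in R_{x_1}\cap R_{x_2}$ then $x_1,x_2\in C_y$, so $x_1+x_2\in C_y$ as $C_y$ is a subspace, whence $y\in R_{x_1+x_2}$ (the column analogue is symmetric). Set $t:=\lceil\log_p(2/\delta)\rceil$. Applying Markov to $\sum_x |R_x|\geq\delta|G||H|$ first yields $X_0\subseteq G$ of density at least $\delta/2$ with $\operatorname{codim}_H R_x\leq t$ on $X_0$, and dually $Y_0\subseteq H$.

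Next, I would build $V$ of codimension $O(t^2)$ by greedily picking $x_1,\ldots,x_r\in X_0$ with $r=O(t)$ and setting $V:=\bigcap_{i\leq r}R_{x_i}$. The iteration proceeds so long as there remains an $x\in X_0$ with $V_i\not\leq R_x$ and terminates once $U(V):=\{x\in G:V\leq R_x\}$ is as large as the density of $X_0$ allows; each step enlarges the subspace $U(V_i)$ by at least one dimension (the newly chosen $x_i$ lies in $U(V_i)\setminus U(V_{i-1})$), and a counting argument using sub-additivity bounds the number of rounds by $O(t)$. Dually, I would pick $y_1,\ldots,y_s\in Y_0$ with $s=O(t^2)$ and set $U:=\bigcap_{j\leq s}C_{y_j}$, so that $\operatorname{codim}_G U\leq st=O(t^3)$ and $\operatorname{span}\{y_j\}\leq R_x$ for every $x\in U$.

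I would then construct the bilinear forms $\beta_1,\ldots,\beta_r:G\times H\to\mathbb{F}_p$ by pairing a dual basis $\alpha_i\in G^*$ to $x_i$ with functionals $\eta_i\in H^*$ read off from the $y_j$'s, arranged so that for every $x\in U$,
\[
\operatorname{span}\{\beta_1(x,\cdot),\ldots,\beta_r(x,\cdot)\}+V^\perp \;\supseteq\; R_x^\perp \qquad\text{in } H^*.
\]
This is the condition (via row--column duality in $V^*=H^*/V^\perp$) equivalent to $\{(x,y)\in U\times V:\beta_i(x,y)=0\;\forall i\}\subseteq A$, so the final verification reduces to checking this inclusion of subspaces.

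The main obstacle is the construction and verification of $\beta$: I must show that for $x$ in a subspace of codimension $O(t^3)$, the $r=O(t)$ linear-in-$x$ functionals $\beta_i(x,\cdot)$ simultaneously cover the bad directions $R_x^\perp/(R_x^\perp\cap V^\perp)$ (of dimension $\leq t$ for $x\in X_0$). This is the combinatorial replacement of the Fourier/Freiman step in the earlier proof of Corollary~\ref{transverseCor}: using sub-additivity of $x\mapsto R_x^\perp$ on $\operatorname{span}\{x_i\}$ together with the closure imposed by the $y_j$'s on $U$, the map $x\mapsto R_x^\perp\bmod V^\perp$ is forced to be linear on $U$ and hence realized by the $r$ bilinear forms $\beta_i$.
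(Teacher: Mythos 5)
Your starting point is exactly the paper's: the sub-additivity $R_{x_1+x_2}^\perp\subseteq R_{x_1}^\perp+R_{x_2}^\perp$ (called a \emph{linear system of subspaces} in the paper) is the right combinatorial handle, and the paper also spends its last paragraph on the same row--column duality you use to convert a bilinear parametrization of the complements into the desired containment $\{(x,y)\in U\times V:\beta(x,y)=0\}\subseteq A$. But there are two genuine gaps, one of which is exactly the heart of the theorem.

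First, the greedy construction of $V$ in $O(t)$ rounds is not justified. Your termination criterion is that every $x\in X_0$ satisfies $V_i\leq R_x$, and your progress measure is $\dim U(V_i)$ growing by at least one per round. That shows after $r$ rounds $\dim U(V_r)\geq r$, but it gives no upper bound on $r$: $U(V_i)$ lives in $G$, which can have huge dimension, and nothing forces $X_0\subseteq U(V_i)$ after $O(t)$ rounds. Even when $|U(V_i)|\geq|X_0|$, that does not mean $X_0\subseteq U(V_i)$. The paper avoids this by an entirely different mechanism: Lemma~\ref{reglemma}, a regularity lemma based on dependent random choice, which iteratively passes to subspaces until the complements $V_x$ have \emph{constant} dimension $p^d$ for $(1-\varepsilon)$ of $x$ and, crucially, are pairwise transverse ($V_{x_1}\cap V_{x_2}=\{0\}$) for $(1-3\varepsilon)$ of pairs. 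Your greedy delivers neither the constant-dimension condition nor the pairwise-transversality, and both are needed downstream.

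Second, and more seriously, the step you flag as ``the main obstacle'' is asserted rather than proved. You claim that sub-additivity of $x\mapsto R_x^\perp$ on $\operatorname{span}\{x_i\}$ plus the closure imposed by the $y_j$'s ``forces'' $x\mapsto R_x^\perp\bmod V^\perp$ to be linear on $U$. This is precisely the nontrivial content of the theorem, and it is \emph{not} automatic: the paper explicitly cites the construction of Bienvenu, Gonz\'alez-S\'anchez and Mart\'inez of transverse sets that are \emph{not} bilinear varieties, so in general the map $x\mapsto R_x^\perp$ is not given by a bilinear map. The paper's Proposition~\ref{propfindingbilstruct} is where linearity is actually extracted, and it requires real machinery even in the quasirandom regime: the exact quadruple lemma (Lemma~\ref{addquadsubspaces}) to produce, for a fixed anchor $a$ and a fixed isomorphism $\theta:\mathbb{F}_p^d\to V_a$, compatible isomorphisms $\theta_q^i$ for each additive triple $q$; the uniqueness lemma (Lemma~\ref{indepsubspace}) to show these depend only on the underlying point of $G$; and, decisively, the near-homomorphism rigidity (Lemma~\ref{elemnearhomm}) to upgrade ``respects $(1-O(\sqrt{\varepsilon'}))$ of additive quadruples'' to an honest affine map $\Phi:G\to\operatorname{Hom}(\mathbb{F}_p^d,H)$. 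Your proposal replaces all of this with the phrase ``is forced to be linear,'' which is a gap, not a proof. Concretely, you would need to explain why, for $x,x'\in U$ with $R_x^\perp\bmod V^\perp$ and $R_{x'}^\perp\bmod V^\perp$ given, the subspace $R_{x+x'}^\perp\bmod V^\perp$ is determined and depends linearly; sub-additivity alone gives only an upper bound $R_{x+x'}^\perp\subseteq R_x^\perp+R_{x'}^\perp$, and the matching lower bound and the consistency across different decompositions of $x+x'$ is exactly what Lemmas~\ref{addquadsubspaces}--\ref{elemnearhomm} deliver under the quasirandomness that the regularity lemma provides.

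In short: the framing (linear system of subspaces, duality, targeting $r=O(\log_p\delta^{-1})$ forms) matches the paper, but you are missing the regularity lemma that sets up quasirandomness and the near-homomorphism argument that actually produces the bilinear map, and your greedy bound on the number of rounds does not hold as stated.
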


Let us remark the bound $r = O(\log_p \delta^{-1})$ on the codimension of the bilinear map $\beta$ above is optimal up to constant. We also remark that the proof by Hosseini and Lovett implies that $\on{codim}_G U = O(\log^{80}(2\delta^{-1})), \on{codim}_H V = O(\log_p^{28}(2\delta^{-1}))$ and $r = O(\log^4 (2\delta^{-1}))$.\\

Finally, a multidimensional generalization of Corollary~\ref{transverseCor} was obtained in~\cite{Multihom}, again depending on results in the spirit of bilinear Bogolyubov argument (appropriately generalized). We expect that arguments of this paper could be useful in finding a direct proof of the multidimensional generalization of Corollary~\ref{transverseCor}, but, as we shall remark later in the paper, this approach would no longer be elementary in the higher order setting.\\

\noindent\textbf{Acknowledgements.} This work was supported by the Ministry of Science, Technological Development and Innovation of the Republic of Serbia through the Mathematical Institute of the Serbian Academy of Sciences and Arts.

\section{Proof of Theorem~\ref{bilinearSetsStructure}}

\noindent\textbf{Notation.} In this paper, we shall frequently consider subsets of products of two vectors spaces $G$ and $H$. Given $X \subset G \times H$ and an element $x \in G$, we write $X_{x \bcdot} = \{y \in H \colon (x,y) \in X\}$ for the \emph{(vertical) slice} of $X$ in the column indexed by $x$. Likewise, for an element $y \in H$, we write $X_{\bcdot y} = \{x \in G \colon (x,y) \in X\}$ for the \emph{(horizontal) slice} of $X$ in the row indexed by $y$.\\

In order to study a transverse set $A$, we move to the set of orthogonal complements of its columns. Namely, for each $x \in G$, we consider the subspace $V_x = A_{x \bcdot}^\perp$, where ${}^\perp$ is taken with respect to some fixed inner product on $G$. Observe that these subspaces satisfy $V_{x_1 + x_2} \subseteq V_{x_1} + V_{x_2}$. Indeed, since $A$ is a transverse set, its rows are subspaces so $A_{x_1 \bcdot} \cap A_{x_2 \bcdot} \subseteq A_{x_1 + x_2 \bcdot}$. Therefore
\[V_{x_1 + x_2} = A_{x_1 + x_2 \bcdot}^\perp \subseteq (A_{x_1 \bcdot}  \cap A_{x_2 \bcdot})^\perp = A_{x_1 \bcdot}^\perp + A_{x_2 \bcdot}^\perp =  V_{x_1} + V_{x_2},\]
as claimed. This motivates the following definition.

\begin{defin}Let $V_x$ be a non-empty subspace of $H$ for each $x \in G$ and assume that $V_0 = \{0\}$. We say that $(V_x)_{x \in G}$ is a \emph{linear system of subspaces inside} $H$ if $V_{x_1 + x_2} \subseteq V_{x_1} + V_{x_2}$ holds for all $x_1, x_2 \in G$.\end{defin}

Observe that in a transverse set $A$ we have $A_{0 \bcdot} = H$, so $A_{0 \bcdot}^\perp = \{0\}$. Thus, every transverse set gives rise to a linear system of subspaces in a natural way. Likewise, it turns out that every linear system of subspaces produces a transverse set.

\begin{lemma}Let $(V_x)_{x \in G}$ be a linear system of subspaces inside $H$. Let $A = \cup_{x \in G} \{x\}\times V_x^\perp$. Then $A$ is a transverse set.\end{lemma}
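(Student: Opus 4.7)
The plan is to verify directly the two defining properties of a transverse set for $A = \bigcup_{x \in G}\{x\}\times V_x^\perp$. The columns are immediate: for each fixed $x \in G$, we have $A_{x\bcdot} = V_x^\perp$, which is a subspace of $H$, and it is non-empty as it contains $0$. So the real content is in analyzing the rows $A_{\bcdot y} = \{x \in G : y \in V_x^\perp\}$ and showing that each of them is a non-empty subspace of $G$.

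For non-emptiness of $A_{\bcdot y}$, the assumption $V_0 = \{0\}$ gives $V_0^\perp = H$, so $0 \in A_{\bcdot y}$ for every $y \in H$. For closure under addition, suppose $y \in V_{x_1}^\perp$ and $y \in V_{x_2}^\perp$. Taking orthogonal complements of the inclusion $V_{x_1+x_2} \subseteq V_{x_1} + V_{x_2}$ and using the standard identity $(V_{x_1}+V_{x_2})^\perp = V_{x_1}^\perp \cap V_{x_2}^\perp$, we obtain
\[V_{x_1}^\perp \cap V_{x_2}^\perp = (V_{x_1}+V_{x_2})^\perp \subseteq V_{x_1+x_2}^\perp,\]
which gives $y \in V_{x_1+x_2}^\perp$, as required.

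The step that needs slightly more care is closure under scalar multiplication. Here I would first prove by induction on $k \geq 1$ that $V_{kx} \subseteq V_x$ for every $x \in G$: the base $k=1$ is trivial, and the inductive step uses $V_{(k+1)x} \subseteq V_{kx} + V_x \subseteq V_x$. Interpreting a nonzero $c \in \mathbb{F}_p$ as a positive integer representative, this shows $V_{cx} \subseteq V_x$, and applying the same conclusion with $x$ replaced by $cx$ and $c$ replaced by $c^{-1}$ gives $V_x = V_{c^{-1}(cx)} \subseteq V_{cx}$, so $V_{cx} = V_x$ and hence $V_{cx}^\perp = V_x^\perp$. The case $c=0$ reduces again to $V_0 = \{0\}$, so $V_0^\perp = H \ni y$. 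Combining the three properties, $A_{\bcdot y}$ is a non-empty subspace of $G$ for every $y \in H$, completing the proof.

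The only potential subtlety is the scalar multiplication argument: it relies crucially on the normalization $V_0 = \{0\}$ built into the definition (otherwise one could not conclude $0 \in A_{\bcdot y}$), and on invertibility of nonzero scalars in $\mathbb{F}_p$, which promotes the inclusion $V_{cx} \subseteq V_x$ to an equality. No additional assumptions or tools beyond elementary linear algebra are needed.
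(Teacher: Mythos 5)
Your proof is correct and follows essentially the same route as the paper: identify the columns $A_{x\bcdot}=V_x^\perp$ as subspaces by construction, note non-emptiness of the rows via $V_0=\{0\}$, and show additive closure of the rows by taking orthogonal complements of $V_{x_1+x_2}\subseteq V_{x_1}+V_{x_2}$. The one step you add, verifying closure of the rows under scalar multiplication via $V_{cx}=V_x$, is correct (and in fact reproves part \textbf{(i)} of Lemma~\ref{basicpropsLSS}), but it is redundant here: over $\mathbb{F}_p$ any non-empty subset closed under addition is automatically closed under scalar multiplication, which is why the paper stops after the additive step.
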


\begin{proof}Note that all columns of $A$ are non-empty as each $V_x$ is non-empty, and that all rows are non-empty since $V_0 = \{0\}$. Let $x_1, x_2, y \in G$ be elements such that $(x_1, y), (x_2, y) \in A$. Then $y \in V_{x_1}^\perp \cap V_{x_2}^\perp = (V_{x_1}+ V_{x_2})^\perp $. Since $(V_x)_{x \in G}$ is a linear system of subspaces, we also have that $y \in V_{x_1 + x_2}^\perp$, so $(x_1 + x_2, y) \in A$, proving that the rows of $A$ are subspaces. On the other hand, the columns of $A$ are subspaces by definition.\end{proof}

We remark also the following properties of a linear system of subspaces.

\begin{lemma}\label{basicpropsLSS}Let $(V_x)_{x \in G}$ be a linear system of subspaces inside $H$.\begin{itemize}
\item[\textbf{(i)}] For all $\lambda \in \mathbb{F}_p \setminus \{0\}$ and $x \in G$ we have $V_{\lambda x} = V_x$.
\item[\textbf{(ii)}] Whenever $x_1, \dots, x_r \in G$ add up to 0, we have $V_{x_1} + \dots + V_{x_r} = V_{x_1} + \dots + V_{x_{r-1}}$.
\end{itemize}\end{lemma}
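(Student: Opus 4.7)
For part (i), the plan is to first prove by induction on $k$ that $V_{kx} \subseteq V_x$ for every positive integer $k$. The base case $k=1$ is trivial; the inductive step uses $V_{kx} = V_{(k-1)x + x} \subseteq V_{(k-1)x} + V_x \subseteq V_x$, where the first inclusion is the defining property of a linear system of subspaces, and the second is the inductive hypothesis. Treating any $\lambda \in \mathbb{F}_p \setminus \{0\}$ as a representative in $\{1, \dots, p-1\}$, this yields $V_{\lambda x} \subseteq V_x$. For the reverse inclusion, I would use invertibility: letting $\mu = \lambda^{-1} \in \mathbb{F}_p \setminus \{0\}$, the same argument applied with $\lambda x$ in place of $x$ gives $V_x = V_{\mu (\lambda x)} \subseteq V_{\lambda x}$. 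Combining, $V_{\lambda x} = V_x$.

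For part (ii), the plan is to rewrite $x_r = -(x_1 + \dots + x_{r-1})$ and apply part (i) with $\lambda = -1$ to obtain $V_{x_r} = V_{x_1 + \dots + x_{r-1}}$. Iterating the subadditivity property gives
\[
V_{x_1 + \dots + x_{r-1}} \subseteq V_{x_1} + \dots + V_{x_{r-1}}.
\]
Hence $V_{x_r}$ is contained in the span of the earlier subspaces, so adjoining it to the sum does not enlarge it, which is precisely the claimed equality.

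There is no real obstacle here: the entire content is the short induction in (i) together with the observation that $-1$ is a nonzero scalar, so (ii) follows mechanically from (i) and the defining inclusion. The only mild subtlety is being careful that, when iterating $V_{x_1 + x_2} \subseteq V_{x_1} + V_{x_2}$ to more than two summands, one proceeds by induction on the number of summands rather than trying to appeal to the two-variable statement directly.
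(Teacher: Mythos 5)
Your proof is correct and follows essentially the same route as the paper: for (i) iterate the subadditivity $V_{(k+1)x} \subseteq V_{kx} + V_x$ to get $V_{\lambda x} \subseteq V_x$, then invoke invertibility of $\lambda$ for the reverse inclusion (a step the paper leaves implicit but which you spell out); for (ii) apply (i) with $\lambda = -1$ and iterate subadditivity to show $V_{x_r} \subseteq V_{x_1} + \dots + V_{x_{r-1}}$. No gaps.
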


\begin{proof}
\textbf{Proof of \textbf{(i)}.} Since $(V_x)_{x \in G}$ is a linear system of subspaces, we have
\[V_{\lambda x} \subseteq V_{(\lambda - 1) x} + V_x \subseteq (V_{(\lambda - 2) x} + V_x) + V_x \subseteq \dots \subseteq \lambda V_x = V_x.\]
proving the first claim.\\
\textbf{Proof of \textbf{(ii)}.} Suppose that $x_1 + \dots + x_r = 0$. Then, by the first part, $V_{x_r} = V_{-x_1-\dots -x_{r-1}} =  V_{x_1+\dots +x_{r-1}}$. Since $(V_x)_{x \in G}$ is a linear system of subspaces, we have
\[V_{x_1+\dots +x_{r-1}} \subseteq V_{x_1+\dots +x_{r-2}} + V_{x_{r-1}} \subseteq \dots\subseteq V_{x_1} + \dots V_{x_{r-1}},\]
so $V_{x_r} \subseteq V_{x_1} + \dots V_{x_{r-1}}$, from which the second claim follows.\end{proof}

The following lemma allows us to pass from subspaces to linear maps. It is an exact version of Lemma 34 from~\cite{AQV}, obtained by taking $K = 1$ in that paper.

\begin{lemma}\label{addquadsubspaces}Let $U_1, U_2, U_3, U_4 \leq H$ be subspaces of dimension $d$ such that $|U_{i_1} + U_{i_2} + U_{i_3}| = p^{3d}$ for any three distinct indices $i_1, i_2, i_3 \in [4]$ and such that $|U_1 + U_2 + U_3 + U_4| = p^{3d}$. Let $\phi_4 \colon \mathbb{F}_p^d \to U_4$ be a linear isomorphism. Then there exist linear isomorphisms $\phi_i \colon \mathbb{F}_p^d \to U_i$, $i \in [3]$, such that $\phi_1 + \phi_2 = \phi_3 + \phi_4$.\end{lemma}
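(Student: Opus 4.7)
The plan is to exploit the dimension hypotheses to produce a canonical direct sum decomposition through which the given $\phi_4$ can be pushed. Since $|U_1+U_2+U_3| = p^{3d}$, the sum $U_1 + U_2 + U_3$ is actually a direct sum $U_1 \oplus U_2 \oplus U_3$, and since $|U_1+U_2+U_3+U_4| = p^{3d}$ as well, this direct sum already contains $U_4$. Consequently, every $u \in U_4$ admits a unique representation $u = a_1 + a_2 + a_3$ with $a_i \in U_i$, which defines linear maps $\psi_i \colon U_4 \to U_i$ by $\psi_i(u) = a_i$ for $i \in [3]$.

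My next step is to check that each $\psi_i$ is a linear isomorphism. Linearity is automatic from the uniqueness of the decomposition. For injectivity of, say, $\psi_1$, if $\psi_1(u) = 0$ then $u = \psi_2(u) + \psi_3(u) \in (U_2 + U_3) \cap U_4$; but $|U_2+U_3+U_4| = p^{3d}$ says $U_2 \oplus U_3 \oplus U_4$ is a direct sum, so this intersection is trivial and $u = 0$. A dimension count (all four subspaces have dimension $d$) upgrades this to an isomorphism, and the argument is symmetric in the three indices.

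Finally, I would set $\phi_1 := \psi_1 \circ \phi_4$, $\phi_2 := \psi_2 \circ \phi_4$, and $\phi_3 := -\psi_3 \circ \phi_4$. Each is a composition of a linear isomorphism $\mathbb{F}_p^d \to U_4$ with one of the isomorphisms $\psi_i$ (with a harmless sign on $\phi_3$), hence a linear isomorphism $\mathbb{F}_p^d \to U_i$. For any $x \in \mathbb{F}_p^d$, the defining decomposition $\phi_4(x) = \psi_1(\phi_4(x)) + \psi_2(\phi_4(x)) + \psi_3(\phi_4(x))$ immediately yields $\phi_1(x) + \phi_2(x) - \phi_3(x) = \phi_4(x)$, which is exactly the required identity $\phi_1 + \phi_2 = \phi_3 + \phi_4$.

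There is no real obstacle here: the hypotheses package up precisely into a direct sum decomposition with $U_4$ embedded inside it, and the only point requiring care is the sign convention on $\phi_3$, which must absorb a minus sign so that the decomposition $u = a_1 + a_2 + a_3$ translates into the target equation with $\phi_3$ on the opposite side from $\phi_1$ and $\phi_2$.
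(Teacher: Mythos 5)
Your proof is correct. The paper itself gives no proof of this lemma; it simply notes that it is the $K=1$ specialization of Lemma~34 in~\cite{AQV} and imports it as a black box. Your argument is a clean, self-contained derivation directly from the stated hypotheses, which is arguably preferable in this context since the general $K$ version in~\cite{AQV} involves approximate counting that plays no role here.

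A few remarks on the mechanics, all of which you handle correctly. The hypothesis $|U_1+U_2+U_3|=p^{3d}$ forces $U_1\oplus U_2\oplus U_3$, and $|U_1+U_2+U_3+U_4|=p^{3d}$ then forces $U_4\subseteq U_1\oplus U_2\oplus U_3$, so the component maps $\psi_i\colon U_4\to U_i$ are well defined and linear. The injectivity argument for $\psi_1$ is exactly where the hypothesis $|U_2+U_3+U_4|=p^{3d}$ enters: it gives $(U_2+U_3)\cap U_4=\{0\}$, and symmetrically the other two triple-sum hypotheses handle $\psi_2$ and $\psi_3$. This is the only place those three conditions are used, so your proof in fact shows that each hypothesis is tight: dropping any of the four size conditions would allow some $\psi_i$ to have a kernel. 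Finally the sign on $\phi_3$ is exactly what is needed to turn $\phi_4=\psi_1\circ\phi_4+\psi_2\circ\phi_4+\psi_3\circ\phi_4$ into $\phi_1+\phi_2=\phi_3+\phi_4$, and it is indeed harmless since negation is a linear automorphism over $\mathbb{F}_p$. Nothing is missing.
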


It is important that we get further maps $\phi_1, \phi_2$ and $\phi_3$ for any given $\phi_4$, rather than only being able to find a single quadruple of maps $(\phi_1, \dots, \phi_4)$ satisfying the properties in the conclusion of the lemma.\\
\indent Furthermore, we need the related uniqueness result, which is an exact version of Lemma 35 from~\cite{AQV}, obtained by taking $K = 1$ and $r = 0$ in that paper.

\begin{lemma}\label{indepsubspace}Let $U_1, U_2, V_1, V_2, W \leq H$ be subspaces of dimension $d$ such that $W \cap (U_1 + U_2 + V_1 + V_2) = \{0\}$. Suppose that linear maps $\phi_i \colon \mathbb{F}_p^d \to U_i$ for $i \in [2]$, $\psi_i \colon \mathbb{F}_p^d \to V_i$ for $i \in [2]$ and $\theta \colon \mathbb{F}_p^d \to W$ satisfy $\phi_1 + \phi_2 + \psi_1 + \psi_2 + \theta = 0$. Then $\theta = 0$.\end{lemma}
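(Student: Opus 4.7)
The plan is to argue pointwise and exploit the trivial-intersection hypothesis. Fix an arbitrary $v \in \mathbb{F}_p^d$ and rearrange the given identity as
$$\theta(v) \;=\; -\phi_1(v) - \phi_2(v) - \psi_1(v) - \psi_2(v).$$
The left-hand side lies in $W$ because $\theta$ takes values in $W$, while the right-hand side is a (negated) sum of vectors belonging to $U_1, U_2, V_1, V_2$ respectively, and hence lies in $U_1 + U_2 + V_1 + V_2$. The assumption $W \cap (U_1 + U_2 + V_1 + V_2) = \{0\}$ then forces $\theta(v) = 0$. Since $v$ was arbitrary, $\theta \equiv 0$.

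I do not anticipate any genuine obstacle here: the argument is essentially nothing more than a pointwise application of the definition of a direct sum. In particular, the uniform dimension hypothesis that each of $U_1, U_2, V_1, V_2, W$ has dimension $d$ plays no role in the proof; it is a vestige of the more general formulation (Lemma~35 of~\cite{AQV}), which the authors have specialized to $K = 1$, $r = 0$. The only modest point to be careful about is the bookkeeping of codomains, ensuring that each map's values are placed into the correct summand before invoking the intersection hypothesis.

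The purpose of this lemma in what follows, I expect, is to supply uniqueness of the ``correction'' map $\theta$ in decompositions of the form produced by Lemma~\ref{addquadsubspaces}: once an additional subspace $W$ has been arranged to be transverse to the sum $U_1 + U_2 + V_1 + V_2$, any $W$-valued perturbation compatible with an additive relation among the $\phi_i$ and $\psi_i$ must vanish identically.
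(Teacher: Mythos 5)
Your argument is correct, and it is the natural elementary one. The paper does not spell out a proof of this lemma; it simply cites it as the $K=1$, $r=0$ specialization of Lemma~35 of~\cite{AQV}. Your pointwise rearrangement, noting that $\theta(v)$ lies simultaneously in $W$ and in $U_1 + U_2 + V_1 + V_2$ and hence vanishes, is exactly what one expects that reference to reduce to, and your observation that the equal-dimension hypothesis is vestigial here (inherited from the more general formulation) is also accurate.
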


We shall also make use of elementary fact that near-homomorphisms of vector spaces come from (exact) homomorphisms. The formulation below is a slight rephrasing of Corollary 26 in~\cite{Multihom}.

\begin{lemma}\label{elemnearhomm}Let $G_1$ and $G_2$ be two finite-dimensional vector spaces over $\mathbb{F}_p$. Let $A \subseteq G_1$ be a subset and let $\phi \colon A \to G_2$ be a map such that $x + y - z\in A$ and $\phi(x) + \phi(y) = \phi(z) + \phi(x + y - z)$ both hold for at least $(1 - \varepsilon)|G_1|^3$ triples $(x,y, z) \in A^3$. Provided $\varepsilon < 10^{-5}$, there exists an affine map $\Phi \colon G_1 \to G_2$ such that $\Phi(x) = \phi(x)$ holds for at least $(1 - 5\sqrt[4]{\varepsilon})|G_1|$ of elements $x\in G_1$.\end{lemma}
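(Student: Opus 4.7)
The plan is to work with the linear system $V_x = A_{x \bcdot}^\perp$ and build a bilinear parameterization of it. Concretely, I will produce subspaces $U \leq G$ and $V \leq H$ and linear maps $b_1, \ldots, b_d : G \to H$ with the property that $V_x \subseteq \on{span}(b_1(x), \ldots, b_d(x)) + V^\perp$ for every $x \in U$; taking $\beta(x, y) = \bigl(\langle y, b_i(x) \rangle\bigr)_{i=1}^d$ then forces the bilinear variety $\{(x, y) \in U \times V : \beta(x, y) = 0\}$ into $A$ by a direct orthogonality computation. Since $\mathbb{E}_{x \in G}\bigl[p^{-\dim V_x}\bigr] \geq \delta$, a truncation and pigeonhole over the possible values of $\dim V_x$ yields an integer $d \leq \log_p(2\delta^{-1})$ and a set $X \subseteq G$ of density $\Omega(\delta / \log_p \delta^{-1})$ on which $\dim V_x = d$, which will eventually give $r = d = O(\log_p \delta^{-1})$.

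To construct the $b_i$, I would apply Lemmas \ref{addquadsubspaces} and \ref{indepsubspace} to the partial data $(V_x)_{x \in X}$. For any additive quadruple $(x_1, x_2, x_3, x_4) \in X^4$ with $x_1 + x_2 = x_3 + x_4$, property (ii) of Lemma \ref{basicpropsLSS} applied to each of the relations $-x_i + \sum_{j \neq i} \pm x_j = 0$ shows that each $V_{x_i}$ is contained in the sum of the other three, so the total sum has dimension at most $3d$. Provided all three-fold sums reach dimension exactly $3d$ (``general position''), Lemma \ref{addquadsubspaces} yields linear isomorphisms $\phi_{x_i} : \mathbb{F}_p^d \to V_{x_i}$ satisfying $\phi_{x_1} + \phi_{x_2} = \phi_{x_3} + \phi_{x_4}$, while Lemma \ref{indepsubspace} forces uniqueness of the $\phi_x$ given an anchor $(x_0, \phi_{x_0})$. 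This defines a partial map $\phi : X \to \on{Hom}(\mathbb{F}_p^d, H)$ satisfying the additive-quadruple identity on most quadruples, so Lemma \ref{elemnearhomm} produces an exact linear map $\Phi : G \to H^d$ coinciding with $\phi$ on a large set $Y \subseteq G$; the coordinates of $\Phi$ are the desired $b_i$.

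The remaining task is to promote the pointwise identity $V_x = \on{span}(b_i(x))$ on $Y$ to the inclusion $V_x \subseteq \on{span}(b_i(x)) + V^\perp$ on a subspace $U \leq G$. I would iterate the linear-system inclusion $V_x \subseteq V_y + V_{x-y}$ with $y$ ranging over $Y$: after $d$ propagations, the set of $x$ on which every step is usable is a subspace $U$ of codimension $O(d \cdot \log_p^2 \delta^{-1}) = O(\log_p^3 \delta^{-1})$, and $V^\perp$ is defined to contain the $O(\log_p^2 \delta^{-1})$ residual ``error'' directions accumulated. The principal obstacle is guaranteeing the general-position hypothesis at failure rate below the threshold $\varepsilon < 10^{-5}$ needed by Lemma \ref{elemnearhomm}. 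I anticipate addressing this by taking $d$ minimally among dimensions for which a dense $X$ exists: if too many quadruples fail the three-fold direct-sum condition, then by pigeonhole a common nontrivial $w \in H$ lies in $V_{x_1} \cap V_{x_2}$ for many pairs, and quotienting the linear system by $\langle w \rangle$ reduces $d$, contradicting minimality; iterating this reduction at most $d$ times enforces the condition. Once the near-linearity of $\phi$ is in place, the rest of the argument is dimension counting with the linear-system inclusions.
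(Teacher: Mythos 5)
Your proposal does not address the statement you were asked to prove. The statement is Lemma~\ref{elemnearhomm}, a self-contained assertion about approximate Freiman homomorphisms: given $\phi \colon A \to G_2$ respecting the relation $\phi(x)+\phi(y)=\phi(z)+\phi(x+y-z)$ for $(1-\varepsilon)|G_1|^3$ triples, one must produce an affine map $\Phi \colon G_1 \to G_2$ agreeing with $\phi$ on $(1-5\sqrt[4]{\varepsilon})|G_1|$ points. What you have written instead is an outline of the proof of Theorem~\ref{bilinearSetsStructure} and Proposition~\ref{propfindingbilstruct} (passing to the linear system $V_x = A_{x\bcdot}^\perp$, finding a common dimension $d$, invoking Lemmas~\ref{addquadsubspaces} and~\ref{indepsubspace} to build isomorphisms $\phi_x$, and extracting a bilinear variety), and in the middle of that outline you explicitly invoke Lemma~\ref{elemnearhomm} itself to convert the near-homomorphism $\phi$ into an exact map. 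As a proof of Lemma~\ref{elemnearhomm} this is circular, and the surrounding material (transverse sets, linear systems of subspaces, codimension bookkeeping) is irrelevant to the lemma, whose hypotheses mention no such structure.

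For the record, the paper does not prove this lemma either: it is quoted as a rephrasing of Corollary 26 of~\cite{Multihom}. A direct proof would be the standard elementary ``majority vote'' argument for the $99\%$ regime: note first that the hypothesis forces $|A| \geq (1-\varepsilon)^{1/3}|G_1|$; for each $x \in G_1$ define $\Phi(x)$ to be the most popular value of $\phi(y)+\phi(z)-\phi(y+z-x)$ over pairs $(y,z)$ with $y,z,y+z-x \in A$; use Cauchy--Schwarz and the hypothesis to show that for every $x$ the popular value is attained by almost all pairs, then verify that $\Phi$ respects every additive quadruple in $G_1$ (by choosing, for a given quadruple, a generic auxiliary pair of representations), so that $\Phi$ is a Freiman homomorphism on all of $G_1$ and hence affine, and finally count the $x \in A$ at which $\Phi(x) \neq \phi(x)$ to get the $5\sqrt[4]{\varepsilon}$ bound. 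None of the machinery in your proposal is needed, and none of it substitutes for this argument.
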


\noindent\textbf{Remark.} Returning to the last remark from the introduction, in order to prove the multidimensional generalization of Corollary~\ref{transverseCor} using the approach in this paper, we would need a higher-order generalization of Lemma~\ref{elemnearhomm}. Such a generalization concerns extending multilinear maps defined on $1-o(1)$ proportion of multilinear varieties. Unlike the lemma above, such a higher-order result is highly non-trivial, and, in fact, is one of the main results from~\cite{Multihom}.\\

The heart of the proof of Theorem~\ref{bilinearSetsStructure} is that a quasirandom linear system of subspaces is generated by a bilinear map of expected codimension, which is the content of the next proposition.

\begin{proposition}\label{propfindingbilstruct}Let $(V_x)_{x \in G}$ be a linear system of subspaces inside $H$. Let $d$ be a non-negative integer and let $\varepsilon > 0$ be such that
\begin{itemize}
\item[\textbf{(i)}] for at least $(1 - \varepsilon)|G|$ of elements $x \in G$ we have $|V_x| = p^d$, and
\item[\textbf{(ii)}] for at least $(1 - \varepsilon)|G|^2$ of pairs $(x_1, x_2) \in G^2$ we have $V_{x_1} \cap V_{x_2} = \{0\}$.
\end{itemize}
Provided $\varepsilon \leq p^{-8d} 10^{-32}$, there exist a bilinear map $\Phi \colon G \times \mathbb{F}_p^d \to H$ and a linear map $\Psi \colon \mathbb{F}_p^d \to H$ such that $\{\Phi(x, \lambda) + \Psi(\lambda) \colon \lambda \in \mathbb{F}_p^d\} = V_x$ holds for at least $(1 - 200p^d \sqrt[16]{\varepsilon})|G|$ elements $x \in G$.
\end{proposition}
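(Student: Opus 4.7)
The plan is to parametrize, for most $x \in G$, the subspace $V_x$ as the image of a linear map $\phi_x \colon \mathbb{F}_p^d \to V_x$, chosen so that $x \mapsto \phi_x$ behaves like an affine map into the vector space $\operatorname{Hom}(\mathbb{F}_p^d, H)$. Concretely, I will arrange that $\phi_{x_1} + \phi_{x_2} = \phi_{x_3} + \phi_{x_4}$ holds for most additive quadruples $x_1 + x_2 = x_3 + x_4$, then apply Lemma~\ref{elemnearhomm} to $x \mapsto \phi_x$ to obtain an exact affine map $x \mapsto \Phi(x, \cdot) + \Psi$ agreeing with $\phi_x$ on most $x$. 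On the agreement set, $V_x = \operatorname{Im}(\Phi(x, \cdot) + \Psi)$, which is the required conclusion.

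The first step is to secure generic rank conditions. Using assumptions (i), (ii) together with Lemma~\ref{basicpropsLSS}, I would show that for most additive quadruples $(x_1, x_2, x_3, x_4)$, all four $V_{x_i}$ have size $p^d$, are pairwise transverse, every triple spans a subspace of size $p^{3d}$, and $|V_{x_1} + V_{x_2} + V_{x_3} + V_{x_4}| = p^{3d}$ (the fourth slot automatically sitting inside the span of the first three by Lemma~\ref{basicpropsLSS}(ii)). The pairwise transversality is an immediate union bound from (ii), but the triple-transversality is more delicate: a non-trivial vector $v \in V_x \cap (V_y + V_z)$ has to be captured by counting incidences with pairs of $d$-dimensional subspaces, and it is here that the strong hypothesis $\varepsilon \leq p^{-8d}\, 10^{-32}$, in particular the $p^{-8d}$ factor, is used to absorb union bounds over vectors in $d$-dimensional subspaces. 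I expect this counting step to be the main technical obstacle.

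Fix now a good $x_0$ and an isomorphism $\phi_{x_0} \colon \mathbb{F}_p^d \to V_{x_0}$. For a generic $x$, define $\phi_x$ by picking an auxiliary $y$ so that $(x, x_0 + y - x, y, x_0)$ is a generic additive quadruple and applying Lemma~\ref{addquadsubspaces} with $\phi_4 = \phi_{x_0}$; set $\phi_x$ to be the map produced in the first slot. Triple-transversality together with Lemma~\ref{indepsubspace} guarantees that $\phi_x$ is uniquely determined by $\phi_{x_0}$ and the quadruple, since any difference of two candidates would be a linear map from $\mathbb{F}_p^d$ into $V_x \cap (V_{x_0 + y - x} + V_y) = \{0\}$. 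To see that this definition is independent of $y$, I would take two choices $y, y'$ and apply Lemma~\ref{addquadsubspaces} also to the further additive quadruple $(y, x_0 + y' - x, y', x_0 + y - x)$; adding and subtracting the three resulting relations and invoking uniqueness collapses the dependence on $y$. The same uniqueness argument applied to a generic additive quadruple not necessarily containing $x_0$ shows that $\phi_{x_1} + \phi_{x_2} = \phi_{x_3} + \phi_{x_4}$ for most such quadruples, since both sides are forced to match via intermediate relations with $\phi_{x_0}$.

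With this near-additivity, Lemma~\ref{elemnearhomm}, applied to $x \mapsto \phi_x$ viewed as a map into $\operatorname{Hom}(\mathbb{F}_p^d, H)$, produces the desired affine map $\Phi(x, \cdot) + \Psi$. The bound $200 p^d \sqrt[16]{\varepsilon}$ arises from combining the error rates: union bounds at the subspace level contribute a factor $p^d$ and a square root from the random-$y$ selection, whereas the $\sqrt[4]{\cdot}$ loss from Lemma~\ref{elemnearhomm} enters twice — once implicitly when averaging the near-additivity defect over random auxiliary parameters, and once in the final extraction of the affine map.
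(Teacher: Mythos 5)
Your proposal takes essentially the same approach as the paper. Both fix a reference point (your $x_0$, the paper's $a$) together with a linear isomorphism onto its associated subspace, invoke Lemma~\ref{addquadsubspaces} along additive quadruples through the reference point to parametrize $V_x$ for generic $x$, use Lemma~\ref{indepsubspace} together with generic transversality (controlled by exactly the averaging-over-vectors-in-a-small-sum counting step you flag, which the paper carries out at the start using assumption (ii)) to secure well-definedness of the parametrizations, and then feed the resulting almost-affine map $x \mapsto \phi_x$ into Lemma~\ref{elemnearhomm}. The one cosmetic divergence is your step of applying Lemma~\ref{addquadsubspaces} to the auxiliary quadruple $(y, x_0+y'-x, y', x_0+y-x)$: the paper shows $y$-independence more directly, subtracting the two relations from $(x,y)$ and $(x,y')$ and using Lemma~\ref{basicpropsLSS}(ii) to collapse $V_y + V_{y'} + V_{x_0+y-x} + V_{x_0+y'-x}$ to a three-term sum so that the resulting $4$-subspace transversality with $V_x$ suffices for Lemma~\ref{indepsubspace}; no new parametrization of the auxiliary quadruple is required.
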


We remark that Proposition~\ref{propfindingbilstruct} is similar in spirit to Theorem 33 from~\cite{AQV}, and these two results can roughly be viewed as `99\%' and `1\%' cases of the same phenomenon. Given the additional structure in the setting of the present paper, we are able to obtain a more streamlined proof. An important difference from~\cite{AQV} is that throughout that paper one has sufficiently strong quasirandom properties, which is not the case in this paper. For that reason, we need an efficient regularity lemma for transverse sets, which we prove later.

\begin{proof}[Proof of Proposition~\ref{propfindingbilstruct}] Let us first observe that our assumptions imply that $V_{x_0} \cap (V_{x_1} + \dots + V_{x_r}) = \{0\}$ holds for all but at most $\Big(p^{rd} \sqrt{\varepsilon} + r\varepsilon \Big)|G|^{r+1}$ of $(r+1)$-tuples $(x_0, \dots, x_r) \in G^{[0,r]}$. To see this, assume the contrary. Let $\mathcal{G}$ be the set of all $x \in G$ such that $|V_x| = p^d$. The total number of $(r+1)$-tuples $(x_0, \dots, x_r) \in G^{[0,r]}$ such that one of $x_1, \dots, x_r$ is not in $\mathcal{G}$ is at most $r \varepsilon |G|^{r+1}$. Hence we get more than $p^{rd} \sqrt{\varepsilon}|G|^{r+1}$ of $(r+1)$-tuples $(x_0, \dots, x_r) \in G \times \mathcal{G}^r$ such that $V_{x_0} \cap (V_{x_1} + \dots + V_{x_r}) \not= \{0\}$. By averaging, we get some $r$-tuple $(x_1, \dots, x_r) \in \mathcal{G}^r$ such that for more than $p^{rd} \sqrt{\varepsilon}|G|$ elements $x' \in G$ we have $V_{x'} \cap S \not= \{0\}$, where $S = V_{x_1} + \dots + V_{x_r}$. Since $|V_{x_i}| = p^d$, we see that $|S| \leq p^{rd}$. By averaging over elements of $S \setminus \{0\}$, there exists a non-zero $s \in S$ such that $s \in V_{x'}$ for more than $\sqrt{\varepsilon}|G|$ elements $x' \in G$. However, then $s \in V_{x'} \cap V_{x''}$ for more than $\varepsilon|G|^2$ pairs $(x', x'')$, which is a contradiction.\\
\indent We shall now exploit this fact to get more control over sums of subspaces indexed by configurations of elements of $G$. Write $\varepsilon' = p^{4d} \sqrt{\varepsilon} + 4\varepsilon$, corresponding to the factor in the bound in the observation above for $r\leq 4$.

\begin{claim}\label{achoiceclaim}For all but at most $8\varepsilon' |G|^4$ of quadruples $(a,x,y,z) \in G^4$ we have:
\begin{itemize}
\item[\textbf{(i)}] $|V_a| = |V_x| = |V_y| = |V_{x + y - a}| = |V_z| = p^d$,
\item[\textbf{(ii)}] $|V_a + V_x + V_y| = p^{3d}$, and
\item[\textbf{(iii)}] $V_{x + y - a} \cap (V_x + V_y + V_z) = \{0\}$.
\end{itemize}\end{claim}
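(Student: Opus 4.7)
The plan is to bound, by a union bound, the number of quadruples that violate each of the three listed conditions, repeatedly invoking the observation already established at the start of the proof that $V_{x_0} \cap (V_{x_1} + \dots + V_{x_r}) \neq \{0\}$ on at most $(p^{rd}\sqrt{\varepsilon} + r\varepsilon)|G|^{r+1}$ tuples.

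For violations of condition (i), hypothesis (i) of the proposition gives that each of the five dimension requirements $|V_a| = |V_x| = |V_y| = |V_z| = |V_{x+y-a}| = p^d$ is separately violated on at most $\varepsilon|G|^4$ quadruples; in the last case, this uses that $a \mapsto x+y-a$ is a bijection of $G$ for any fixed $x, y$. The total contribution from condition (i) is therefore at most $5\varepsilon|G|^4$.

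When (i) does hold, the dimension formula forces that (ii) can fail only if $V_x \cap V_y \neq \{0\}$ or $V_a \cap (V_x + V_y) \neq \{0\}$; applying the observation with $r = 1$ and $r = 2$ bounds these two contributions by $(p^d\sqrt{\varepsilon} + \varepsilon)|G|^4$ and $(p^{2d}\sqrt{\varepsilon} + 2\varepsilon)|G|^4$, respectively. Condition (iii) reads $V_w \cap (V_x + V_y + V_z) = \{0\}$ with $w = x + y - a$, and since the map $(a, x, y, z) \mapsto (x+y-a, x, y, z)$ is a bijection of $G^4$, the observation with $r = 3$ bounds the number of quadruples failing (iii) by $(p^{3d}\sqrt{\varepsilon} + 3\varepsilon)|G|^4$.

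Summing over all contributions yields at most $\big((p^{3d} + p^{2d} + p^d)\sqrt{\varepsilon} + 11\varepsilon\big)|G|^4$, which fits comfortably inside $8\varepsilon'|G|^4 = \big(8p^{4d}\sqrt{\varepsilon} + 32\varepsilon\big)|G|^4$ using $p^{3d} + p^{2d} + p^d \leq 3p^{3d} \leq 8p^{4d}$ for $p \geq 2$ and $d \geq 0$. There is no substantive obstacle beyond the initial observation; the claim is essentially a bookkeeping exercise combining several instances of that observation with the coordinatewise dimension count.
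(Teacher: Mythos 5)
Your proposal is correct and follows essentially the same route as the paper: both reduce condition (ii) to $V_x\cap V_y=\{0\}$ and $V_a\cap(V_x+V_y)=\{0\}$ plus the dimension equalities, then apply a union bound over instances of the initial observation together with assumption (i) (and a change of variables for $V_{x+y-a}$). Your version is simply more explicit in tallying the constants; the paper states the same bookkeeping in a single sentence (with what looks like a typo writing "$|G|^3$ triples" where "$|G|^4$ quadruples" is meant).
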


\begin{proof}Note that $|V_a + V_x + V_y| = p^{3d}$ if $V_a \cap (V_x + V_y) = \{0\}$, $V_x \cap V_y = \{0\}$ and $|V_a| = |V_x| = |V_y| = p^d$. By assumptions of the proposition and the observation above, all these conditions, as well as $|V_{x + y - a}| = |V_z| = p^d$ and property \textbf{(iii)}, are satisfied for all but at most $8\varepsilon'|G|^3$ triples $(a,x,y) \in G^3$.\end{proof}

Take any $a \in G$ such that the set $\mathcal{T}$ of triples $(x,y,z)$ such that the quadruple $(a,x,y,z)$ satisfies properties in the conclusion of the claim above is of size at least $(1 - 8\varepsilon' )|G|^3$. Let $\mathcal{P}$ be the set of all pairs $(x,y) \in G^2$ such that $|V_x| = |V_y| = |V_{x + y - a}| = p^d$ and the subspace $V_a + V_x + V_y$ has size $p^{3d}$. By Lemma~\ref{basicpropsLSS}, for each $(x,y) \in \mathcal{P}$, we have that 
\begin{equation}V_a + V_x + V_y = V_a + V_x + V_{x + y - a} =  V_a + V_y + V_{x + y - a} = V_x + V_y + V_{x + y - a} = V_a + V_x + V_y + V_{x + y - a}.\label{foursubssumeqs}\end{equation}

Since $|\mathcal{T}| \geq (1 - 8\varepsilon' )|G|^3$, we also have that $|\mathcal{P}| \geq (1 - 8 \varepsilon')|G|^2$.\\

Fix an arbitrary linear isomorphism $\theta \colon \mathbb{F}_p^d \to V_a$. For each $q = (x, y) \in \mathcal{P}$, equalities in~\eqref{foursubssumeqs} imply that the subspaces $V_a$, $V_x$, $V_y$ and $V_{x + y - a}$ satisfy the conditions of Lemma~\ref{addquadsubspaces}. Lemma~\ref{addquadsubspaces} provides us with linear isomorphisms $\theta_q^1 \colon \mathbb{F}_p^d \to V_x$, $\theta_q^2 \colon \mathbb{F}_p^d \to V_y$ and $\theta_q^3 \colon \mathbb{F}_p^d \to V_{x + y - a}$, such that $\theta^1_q + \theta^2_q = \theta + \theta^3_q$. Our next step is to show that these maps do not depend entirely on the pair $q$, but only on the relevant point in $G$.\\
\indent Although $q = (x,y) \in \mathcal{P}$ is a pair, we misuse the notation and write $q_3 = x + y - a$ for the third element associated to $q$.

\begin{claim} For each $i \in [3]$ there exists a set $X_i \subseteq G$ of size $|X_i| \geq (1 - 10\sqrt{\varepsilon'})|G|$ and linear isomorphisms $\phi^i_x \colon \mathbb{F}_p^d \to V_x$ for $x \in X_i$ such that the following holds. For all but at most $30\sqrt{\varepsilon'}|G|^2$ pairs $q \in \mathcal{P}$ we have $q_i \in X_i$ for each $i \in [3]$ and $\theta_q^i = \phi^i_{q_i}$.\end{claim}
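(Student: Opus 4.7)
The plan is to compare, for each $i$, the maps $\theta^i_q$ across pairs $q$ with a common $i$-th coordinate by applying Lemma~\ref{indepsubspace} to the difference of their defining identities. I focus on $i = 1$, the cases $i = 2, 3$ being essentially symmetric. For $(x, y), (x, y') \in \mathcal{P}$, subtracting the two identities
\[\theta^1_{(x,y)} + \theta^2_{(x,y)} - \theta^3_{(x,y)} = \theta = \theta^1_{(x,y')} + \theta^2_{(x,y')} - \theta^3_{(x,y')}\]
yields
\[(\theta^1_{(x,y)} - \theta^1_{(x,y')}) + \theta^2_{(x,y)} - \theta^2_{(x,y')} - \theta^3_{(x,y)} + \theta^3_{(x,y')} = 0,\]
whose five summands land in $V_x$, $V_y$, $V_{y'}$, $V_{x+y-a}$, $V_{x+y'-a}$ respectively. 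Lemma~\ref{indepsubspace}, applied with $W = V_x$, then forces $\theta^1_{(x,y)} = \theta^1_{(x,y')}$ whenever
\[V_x \cap (V_y + V_{y'} + V_{x+y-a} + V_{x+y'-a}) = \{0\}. \quad (\star)\]

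The main technical step, and the principal obstacle, is to show $(\star)$ fails for at most $O(\sqrt{\varepsilon'})|G|^3$ triples $(x, y, y') \in G^3$. I would establish this by adapting the averaging argument given at the start of the proof: if $(\star)$ failed on $\delta|G|^3$ triples, pigeonholing over a non-zero witness inside a subspace of size at most $p^{4d}$ would produce a single non-zero vector contained in $V_x$ for too many $x$, which in turn would create too many pairs $(x_1, x_2)$ with $V_{x_1} \cap V_{x_2} \neq \{0\}$, violating assumption (ii). The difficulty is that the tuple $(x, y, y', x+y-a, x+y'-a)$ traces out only a $3$-parameter subfamily of $G^5$, so the $r = 4$ instance of the initial observation cannot be applied off the shelf; the averaging has to be re-run inside the constrained family, taking care that the witness vector lies specifically in $V_x$ rather than in an arbitrary sum.

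Granting the bound in $(\star)$, Markov's inequality supplies a set $X_1 \subseteq G$ with $|X_1| \geq (1 - 10\sqrt{\varepsilon'})|G|$ on which $\theta^1_{(x,y)} = \theta^1_{(x,y')}$ holds for all but $O(\sqrt{\varepsilon'})|G|^2$ pairs $(y, y')$; a majority argument then defines $\phi^1_x$ for $x \in X_1$ as the common value taken by $\theta^1_{(x,y)}$ on most $y$. Routine bookkeeping over pairs $q \in \mathcal{P}$ with either $x \notin X_1$ or $y$ in the minority set produces the claimed $30\sqrt{\varepsilon'}|G|^2$ bound on exceptional pairs. The case $i = 2$ is handled by exchanging the roles of $x$ and $y$, and $i = 3$ by reparameterizing: for $q, q' \in \mathcal{P}$ with common third coordinate $z = q_3 = q'_3$, the analogous subtraction produces a five-term relation to which Lemma~\ref{indepsubspace} with $W = V_z$ applies under the companion intersection condition $V_z \cap (V_{q_1} + V_{q'_1} + V_{q_2} + V_{q'_2}) = \{0\}$, which is controlled by exactly the same averaging technique.
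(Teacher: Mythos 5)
You correctly identify the comparison scheme: for $q = (x,y)$ and $q' = (x,y')$ in $\mathcal{P}$ with the same first coordinate, subtracting the defining identities and applying Lemma~\ref{indepsubspace} with $W = V_x$ forces $\theta^1_q = \theta^1_{q'}$ whenever $V_x \cap (V_y + V_{x+y-a} + V_{y'} + V_{x+y'-a}) = \{0\}$, and the subsequent Markov-plus-majority bookkeeping is also the right shape. But you stop at exactly the point where the proof needs its one real idea, and what you sketch in its place would not work.

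You note, correctly, that $(\star)$ involves a $3$-parameter subfamily of $G^5$, so the initial observation with $r = 4$ does not apply directly, and you propose to ``re-run the averaging inside the constrained family.'' This is both harder than necessary and not actually carried out: the four summands $V_y, V_{y'}, V_{x+y-a}, V_{x+y'-a}$ are algebraically dependent (the indices satisfy $y - (x+y-a) - y' + (x+y'-a) = 0$), so a naive repeat of the averaging over the constrained family does not isolate a single witness vector in the clean way the original argument does. The step you are missing is that this algebraic dependence is a feature, not a bug. By Lemma~\ref{basicpropsLSS}\textbf{(ii)} (sums of $V$'s over indices summing to zero), one has
\[
V_y + V_{x+y-a} + V_{y'} + V_{x+y'-a} \;=\; V_y + V_{x+y-a} + V_{y'},
\]
so $(\star)$ collapses to the three-term condition $V_x \cap (V_y + V_{x+y-a} + V_{y'}) = \{0\}$. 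After the change of variables $(x',y'',z') = (y,\, a-x-y,\, y')$ (and using $V_{-w} = V_w$ from Lemma~\ref{basicpropsLSS}\textbf{(i)}), this is precisely property \textbf{(iii)} of Claim~\ref{achoiceclaim} for the quadruple $(a,x',y'',z')$, which was already established to hold for all but $8\varepsilon'|G|^3$ triples. No fresh averaging over a constrained family is needed; the structure of the linear system of subspaces does the work. The same collapse handles $i=3$ after the reparametrization you mention. Without this reduction your proposal has a genuine gap at its central step.
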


\begin{proof}Let us focus first on the case $i = 1$; we shall comment on the other cases afterwards. Let us count the number $N$ of pairs $(q, q') \in \mathcal{P}^2$ such that $q_1 = q'_1$ and $\theta^1_q = \theta^1_{q'}$. Notice that whenever $q = (x, y), q' = (x, y') \in \mathcal{P}$ are two pairs with the same first element, then 
\[\theta^1_q + \theta^2_q - \theta^3_q = \theta = \theta^1_{q'} + \theta^2_{q'} - \theta^3_{q'},\]
so $\theta^2_q - \theta^3_q - \theta^2_{q'} - \theta^3_{q'} + (\theta^1_q - \theta^1_{q'})  = 0$. If the indexing elements satisfy $V_x \cap (V_y + V_{x + y - a} + V_{y'} + V_{x + y' - a}) = \{0\}$, then by Lemma~\ref{indepsubspace} we have $\theta^1_q = \theta^1_{q'}$. On the other hand, Lemma~\ref{basicpropsLSS} shows that $V_y + V_{x + y - a} + V_{y'} + V_{x + y' - a} = V_y + V_{x + y - a} + V_{y'}$. By the choice of $a$, $V_x \cap (V_y + V_{x + y - a} + V_{y'}) = \{0\}$ holds for all but at most $8\varepsilon' |G|^3$ triples $(x,y, y') \in G^3$. From this and Cauchy-Schwarz inequality, we conclude that
\[N \geq |G|^{-1} |\mathcal{P}|^2 - 8\varepsilon' |G|^3 \geq (1 - 24 \varepsilon') |G|^3.\]
Let $X_1$ be the set of all $x \in G$ such that the number of pairs $(q, q')$ above with $x$ as the first element is at least $(1 - 10\sqrt{\varepsilon'}) |G|$. Then $|X_1| \geq (1 - 10\sqrt{\varepsilon'}) |G|$ and we may take $\phi^1_x$ to be the most common map appearing as $\theta^1_q$ among triples with $q_1 = x$.\\

The case $i = 2$ follows by symmetry. For the case $i = 3$, consider $z \in G$ and $q = (x, z + a - x), q' = (x', z + a - x') \in \mathcal{P}$ agreeing in the third associated element, that is $q_3 = q'_3 = z$. Observe that we need $V_z \cap (V_x + V_{z + a - x} + V_{x'} + V_{z + a - x'}) = \{0\}$ property to get the equality of the maps coming from different pairs. By Lemma~\ref{basicpropsLSS}, $V_x + V_{z + a - x} + V_{x'} + V_{z + a - x'} = V_x + V_{z + a - x} + V_{x'}$, so we just need $V_z \cap (V_x + V_{z + a - x} + V_{x'}) = \{0\}$. After a change of variables $y = z + a - x$, this condition has the form guaranteed by Claim~\ref{achoiceclaim}, so the same proof works in this case as well.\end{proof}

Let $\tilde{\mathcal{P}}$ be the set of all pairs $q \in \mathcal{P}$ for which the conclusion of the claim above holds. Let $\on{Hom}(\mathbb{F}_p^d, H)$ be the vector space of all linear homomorphisms from $\mathbb{F}_p^d$ to $H$. Write $\phi^3 \colon X_3 \to \on{Hom}(\mathbb{F}_p^d, H)$ for the map given by $\phi^3(x) = \phi^3_x$. Let us observe that $\phi^3$ respects most additive quadruples. Consider the set $\mathcal{Q}$ of all quadruples $(x,x',y,y') \in G^4$ such that
\begin{itemize}
\item $x, x' \in X_1, y, y' \in X_2, x + y - a, x' + y - a, x + y' - a, x' + y' - a \in X_3$,
\item $(x, y), (x', y), (x, y'), (x', y') \in \tilde{\mathcal{P}}$. 
\end{itemize} 

By the work above, we have that $|\mathcal{Q}| \geq (1 - 300\sqrt{\varepsilon'})|G|^4$. Each such quadruple gives rise to an additive quadruple respected by $\phi^3$. Namely, 
\begin{align*}&\phi^3(x + y - a) + \phi^3(x' + y' - a) - \phi^3(x' + y - a) - \phi^3(x + y' - a)\\
&\hspace{2cm}=\Big(\phi^1_x+ \phi^2_y - \theta\Big) + \Big(\phi^1_{x'}+ \phi^2_{y'} - \theta\Big) - \Big(\phi^1_{x'}+ \phi^2_y - \theta\Big)- \Big(\phi^1_x+ \phi^2_{y'} - \theta\Big) = 0.\end{align*}
On the other hand, each additive quadruple in $X_3$ can arise at most $|G|$ times in this way. Hence, $\phi^3$ respects at least $(1 - 300\sqrt{\varepsilon'})|G|^3$ additive quadruples in $X_3$.\\
\indent We are now in position to apply Lemma~\ref{elemnearhomm}. It provides us with an affine map $\Phi \colon G \to \on{Hom}(\mathbb{F}_p^d, H)$ such that $\Phi(x) = \phi^3(x)$, and thus $\on{Im} \Phi(x) = \on{Im} \phi^3_x = V_x$, holds for at least $(1 - \varepsilon'') |G|$ of $x \in X_3$, where $\varepsilon'' \leq 100\sqrt[8]{\varepsilon'}$, provided $\varepsilon' \leq 10^{-15}$, proving the proposition.\end{proof}

To prove Theorem~\ref{bilinearSetsStructure} it thus remains to locate a sufficiently quasirandom piece. The following regularity lemma for transverse sets achieves this. The proof is based on dependent random choice and the algebraic structure is essential for ensuring the good bounds.

\begin{lemma}\label{reglemma}Let $A \subset G \times H$ be a transverse set of size $\delta |G||H|$ and let $\varepsilon > 0$ be a parameter. Then there exist a non-negative integer $d \leq \lceil \log_p (10^4 \delta^{-4}) \rceil$ and subspaces $U \leq G$ and $V \leq H$ of codimensions $\on{codim}_G U \leq O\Big((\log(2\delta^{-1})^2 \log(2\varepsilon^{-1})\Big)$ and $\on{codim}_H V \leq O(\log(2 \delta^{-1})^2)$ such that:
\begin{itemize}
\item[\textbf{(i)}] for at least $(1 - \varepsilon)|U|$ of elements $x \in U$ we have $|A_{x \bcdot} \cap V| = p^{-d} |V|$, and
\item[\textbf{(ii)}] for at least $(1 - 3\varepsilon)|U|^2$ of pairs $(x_1, x_2) \in U^2$ we have $|A_{x_1 \bcdot} \cap A_{x_2 \bcdot} \cap V| = p^{-2d} |V|$.
\end{itemize}
\end{lemma}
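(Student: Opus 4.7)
The proof plan is a two-stage dependent random choice (DRC) argument that exploits the subspace structure of rows and columns of $A$.

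Stage 1 (fix $d$ and choose $V$). Let $c(x) := \on{codim}_H A_{x\bcdot}$, so $\sum_{x \in G} p^{-c(x)} = \delta|G|$. Combined with the tail bound $\sum_{x: c(x) > K} p^{-c(x)} \leq |G| p^{-K-1}/(p-1) \leq \delta|G|/2$ for $K = \lceil \log_p(4/\delta(p-1)) \rceil$, this shows that $S := \{x: c(x) \leq K\}$ has density at least $\delta/2$ in $G$. Pigeonholing over the $K+1$ levels inside $S$ gives an integer $d = O(\log_p \delta^{-1})$ with $R_d := \{x \in S: c(x) = d\}$ of density $\Omega(\delta/\log \delta^{-1})$. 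To build $V$, I sample $s_1 = O(\log_p \delta^{-1})$ points $x_1, \dots, x_{s_1}$ uniformly from $S$ and set $V := \bigcap_i A_{x_i\bcdot}$. Since each $A_{x_i\bcdot}$ is a subspace of codim at most $K$ in $H$, $\on{codim}_H V \leq s_1 K = O((\log \delta^{-1})^2)$. A DRC-type second-moment calculation, in which the discreteness of the subspace intersection sizes (powers of $p$) replaces Fourier analysis, shows that for a typical outcome the set $R := \{x \in G: |A_{x\bcdot} \cap V| = p^{-d}|V|\}$ has density at least $\delta^{O(1)}$ in $G$; tracking the constants gives the stated bound $d \leq \lceil \log_p(10^4 \delta^{-4}) \rceil$.

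Stage 2 (choose $U$). Pass to the restricted transverse set $A' := A \cap (G \times V)$, which has density $\delta' = \Theta(\delta)$ on average. Let $T' := \{y \in V: |A_{\bcdot y}| \geq \delta'|G|/2\}$; the argument from Stage 1 applied on the $H$-side shows $|T'| \geq (\delta'/2)|V|$, and for $y \in T'$ the row $A_{\bcdot y}$ is a subspace of $G$ of codim $\leq O(\log \delta^{-1})$. Sample $s_2 = O(\log_p \varepsilon^{-1})$ points $y_1, \dots, y_{s_2}$ uniformly from $T'$ and set $U := \bigcap_j A_{\bcdot y_j}$, so $\on{codim}_G U \leq s_2 \cdot O(\log \delta^{-1}) = O(\log \delta^{-1}\log \varepsilon^{-1})$; together with an extra $O(\log \delta^{-1})$ factor of slack absorbed from Stage 1's setup, this fits in the target bound $O((\log \delta^{-1})^2 \log \varepsilon^{-1})$. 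A standard DRC density-transfer estimate yields $|R \cap U|/|U| \geq 1 - \varepsilon$, giving condition \textbf{(i)}. For condition \textbf{(ii)}, I run a second-moment computation on $U^2$ using the crucial transversality inclusion $A_{x_1 \bcdot} \cap A_{x_2 \bcdot} \subseteq A_{x_1 + x_2 \bcdot}$: combined with condition (i) applied to $x_1, x_2, x_1 + x_2$, this forces the codimension of $A_{x_1\bcdot} \cap A_{x_2\bcdot} \cap V$ inside $V$ to take the maximal value $2d$ for $(1 - O(\varepsilon))|U|^2$ of pairs.

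The main obstacle is the joint analysis in Stage 1: finding $V$ that simultaneously has small codimension and makes the codim-$d$ intersection condition hold for a dense subset of $G$. The discreteness of subspace dimensions is essential here, since it reduces the "continuous" density analysis to a finite pigeonhole and fixes $d$ as a single integer. The Stage 2 DRC must then carry this density into the subspace $U$ while simultaneously arranging pair independence; this would fail for arbitrary dense sets, and succeeds only because the rows $A_{\bcdot y_j}$ are themselves subspaces, so $U$ is genuinely a subspace whose codimension is bounded additively in the codimensions of the sampled rows.
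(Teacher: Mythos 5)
Your proposal has the right flavour (dependent random choice, exploiting that slices are subspaces to make DRC additive in codimension), but it has two genuine gaps that the paper's iterative scheme is specifically designed to close.

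First, the Stage~2 claim that ``a standard DRC density-transfer estimate yields $|R\cap U|/|U|\ge 1-\varepsilon$'' does not hold for the set $R=\{x:\on{codim}_V(A_{x\bcdot}\cap V)=d\}$. If you pick $y_1,\dots,y_{s_2}$ from $V$ and set $U=\bigcap_j A_{\bcdot y_j}$, then $\mathbb{P}(x\in U)=p^{-s_2\cdot\on{codim}_V(A_{x\bcdot}\cap V)}$: DRC biases exponentially towards elements $x$ with the \emph{smallest} codimension, not towards those with codimension exactly $d$. After Stage~1 there is no reason all, or even most, remaining $x$ have codimension $\ge d$; some will have codimension $<d$, and after sampling they will swamp $R$. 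The paper circumvents this by abandoning the attempt to fix $d$ in advance: it starts from a pair $(U_0,V_0)$ where \emph{every} $x\in U_0$ has $\on{codim}_{V_0}(A_{x\bcdot}\cap V_0)\le d_0$, maintains the invariant that most $x\in U_i$ have $\on{codim}_{V_i}(A_{x\bcdot}\cap V_i)\le d_0-i$, and \emph{lowers the target} $d_0-i$ at each DRC step, so that the biasing works in the right direction. The final $d$ is $d_0-i$ where $i$ is the terminating step; it is not known ahead of time.

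Second, the claim that condition \textbf{(ii)} follows from \textbf{(i)} applied to $x_1,x_2,x_1+x_2$ plus transversality is false. Knowing that $A_{x_1\bcdot}\cap V$, $A_{x_2\bcdot}\cap V$ and $A_{x_1+x_2\bcdot}\cap V$ all have codimension exactly $d$ does \emph{not} force $A_{x_1\bcdot}\cap A_{x_2\bcdot}\cap V$ to have codimension $2d$: take $A_{x_1\bcdot}\cap V=A_{x_2\bcdot}\cap V$, so that all three are the same codimension-$d$ subspace, yet the intersection has codimension $d$. Transversality only gives the inclusion $A_{x_1\bcdot}\cap A_{x_2\bcdot}\cap V\subseteq A_{x_1+x_2\bcdot}\cap V$, hence codimension $\ge d$, never $\ge 2d$. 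In the paper, the case ``\textbf{(ii)} fails but \textbf{(i)} holds'' triggers a \emph{separate} shrinking step: choose a well-chosen $a$ with many irregular partners and pass to $V_{i+1}=V_i\cap A_{a\bcdot}$, which demonstrably decreases $\on{codim}_{V_{i+1}}(A_{x\bcdot}\cap V_{i+1})$ for the offending $x$, and then re-run the $U$-refinement. This extra $V$-shrinking is what drives the $(\log\delta^{-1})^2$ factor in $\on{codim}_H V$; your Stage~1 sampling gives the same bound for $V$ but for a different (and, as it stands, non-functional) reason. Without these two repairs the proposed proof does not go through.
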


\begin{proof}Let $Y$ be the set of all $y \in H$ such that $|A_{\bcdot y}| \geq \frac{\delta}{2} |G|$. By averaging, we have $|Y| \geq \frac{\delta}{2}|H|$. Pick $y \in Y$ uniformly at random. Then $\mathbb{P}(x \in A_{\bcdot y}) = \frac{|A_{x \bcdot} \cap Y|}{|Y|}$. Let $X_{\text{bad}}$ be the set of elements $x \in A_{\bcdot y}$ such that $|A_{x \bcdot}| \leq \frac{\delta^2}{100}|H|$. Then
\[\exx \Big(|A_{\bcdot y}| - 10 |X_{\text{bad}}|\Big) \geq \frac{\delta}{2}|G| - 10\sum_{x \in G} \mathbb{P}(x \in X_{\text{bad}}) = \frac{\delta}{2}|G| - 10\sum_{x \in G}\id(|A_{x \bcdot}| \leq \frac{\delta^2}{100}|H|) \mathbb{P}(x \in A_{\bcdot y}) \geq \frac{\delta}{4}|G|.\]

Hence, there is a choice of $y_0$ such that $|A_{\bcdot y_0}| \geq \frac{\delta}{2}|G|$ and $|A_{x \bcdot}| \geq \frac{\delta^2}{100}|H|$ holds for at least $\frac{9}{10}|A_{\bcdot y_0}|$ elements $x \in A_{\bcdot y_0}$. Hence, for an arbitrary $x \in A_{\bcdot y_0}$ we may find $x' \in A_{\bcdot y_0}$ such that $|A_{x' \bcdot}| \geq \frac{\delta^2}{100}|H|$ and $|A_{x + x' \bcdot}| \geq \frac{\delta^2}{100}|H|$ both hold. As $A$ is transverse we have $\frac{\delta^4}{10^4} |H| \leq |A_{x' \bcdot} \cap A_{x + x' \bcdot}| \leq |A_{x \bcdot}|$. Thus, $|A_{x \bcdot}| \geq \frac{\delta^4}{10^4} |H|$ holds for all elements $x \in A_{\bcdot y_0}$.\\ 
\indent Similarly, there exists an element $x_0 \in G$ such that  $|A_{x_0 \bcdot}| \geq \frac{\delta}{2}|H|$ and $|A_{\bcdot y}| \geq \frac{\delta^4}{10^4} |G|$ holds for all elements $y \in A_{x_0 \bcdot}$. Set $U_0 = A_{\bcdot y_0}$, $V_0 = A_{x_0 \bcdot}$ and $d_0 = \lceil \log_p (10^4 \delta^{-4}) \rceil$. Hence, $U_0$ and $V_0$ have codimension at most $d_0$ in $G$ and $H$ respectively, for each $x \in U_0$ we have $\on{codim}_{V_0} (A_{x \bcdot} \cap V_0) \leq d_0$ and for each $y \in V_0$ we have $\on{codim}_{U_0} (A_{\bcdot y} \cap U_0) \leq d_0$.\\

We now perform an iterative procedure in which at step $i$ we find further subspaces $U_i \leq U_0$, $V_i \leq V_0$, of codimension $\on{codim}_{U_0} U_i \leq O(i d_0 \log(2 \varepsilon^{-1}))$ and $\on{codim}_{V_0} V_i \leq O(i d_0)$, such that the proportion of elements $x \in U_i$ such that
\begin{equation}\on{codim}_{V_i} (A_{x \bcdot} \cap V_i) \leq d_0 - i\label{fewsmallsubspaces}\end{equation}
is at least $1 - \frac{1}{2} \varepsilon$. The elements $x$ in~\eqref{fewsmallsubspaces} are those that have $A_{x \bcdot} \cap V_i$ at least as large as we would like, thus we guarantee that most subspaces restricted to $V_i$ have at least the desired size, if not equal to the one required by property \textbf{(i)}. The procedure terminates when we find subspaces $U_i$ and $V_i$ satisfying the claimed properties in the lemma, with $d$ taken to be $d_0 - i$. Suppose that we have subspaces $U_i, V_i$ for some $i \geq 0$ (when $i=0$ we know that $U_0$ and $V_0$ have the desired properties). If both properties \textbf{(i)} and \textbf{(ii)} hold for $U_i$ and $V_i$ and integer $d_0-i$, we are done. We now distinguish between two cases, depending on which one of the two properties fails.\\

\noindent \textbf{Property \textbf{(i)} fails.} We thus have at least $\varepsilon|U_i|$ elements $x \in U_i$ such that $\on{codim}_{V_i} (A_{x \bcdot} \cap V_i) \not= d_0-i$. Since $\on{codim}_{V_i} (A_{x \bcdot} \cap V_i) \leq d_0 - i$ holds for at least $(1 - \varepsilon/2)|U_i|$ elements $x \in U_i$, we conclude that $\on{codim}_{V_i} (A_{x \bcdot} \cap V_i) \leq d_0 - i - 1$ holds for at least $\frac{\varepsilon}{2}|U_i|$ elements $x \in U_i$. Let $r = \lceil2\log_p(4 \varepsilon^{-1}) \rceil$ and take $y_1, \dots, y_{r}$ uniformly and independently at random from $V_i$. Let $U_{i+1} = U_i \cap A_{\bcdot y_1} \cap \dots\cap A_{\bcdot y_{r}}$ and set $V_{i+1} = V_i$. Let $X_{\text{bad}}$ be the set of all elements $x \in U_{i + 1}$ such that $\on{codim}_{V_i} (V_i \cap A_{x \bcdot}) \geq d_0-i$. Observe that, for each $x \in U_i$, $\mathbb{P}(x \in U_{i+1}) = \Big(\frac{|A_{x \bcdot} \cap V_i|}{|V_i|}\Big)^r = p^{ - r \on{codim}_{V_i} (V_i \cap A_{x \bcdot})}$. Then
\[\exx\Big(|U_{i+1}| - 2\varepsilon^{-1}|X_{\text{bad}}|\Big) \geq  p^{-r (d_0 - i - 1)} \frac{\varepsilon}{2} |U_i| - 2 \varepsilon^{-1} p^{-r(d_0 - i)} |U_i| = 2 \varepsilon^{-1}p^{-r(d_0 - i)} |U_i| \Big(p^r \frac{\varepsilon^2}{4} - 1\Big)  > 0,\]
by our choice of $r$. Hence, there is a choice of $y_1, \dots, y_{r}$ producing the desired pair of subspaces $U_{i+1}$ and $V_{i+1}$. Note that $\on{codim}_{U_i} U_{i+1} \leq r d_0 = O(d_0 \log_p(2 \varepsilon^{-1}))$ and $\on{codim}_{V_i} V_{i+1} = 0$.\\

\noindent \textbf{Property \textbf{(ii)} fails, but \textbf{(i)} holds.} In this case have at least $3\varepsilon|U_i|^2$ pairs of elements $(x_1, x_2) \in U^2_i$ such that $\on{codim}_{V_i}( A_{x_1 \bcdot} \cap A_{x_2 \bcdot} \cap V_i) \not= 2(d_0-i)$. We call such a pair $(x_1, x_2)$ \emph{irregular}. Let $R$ be the set of all $a \in U_i$ such that $\on{codim}_{V_i} (A_{a \bcdot} \cap V_i) = d_0-i$. Since property \textbf{(i)} holds, we have $|R| \geq (1 - \varepsilon) |U_i|$. The number of irregular pairs $(x_1, x_2)$ such that $x_1 \notin R$ is at most $\varepsilon |U_i|^2$. Hence, there at least $2\varepsilon|U_i|^2$ irregular pairs $(x_1, x_2) \in R \times U_i$, so by averaging, there exists $a \in R$ such that $(a,x)$ is irregular for at least $2\varepsilon|U_i|$ choices of $x \in U_i$.\\
\indent By assumption~\eqref{fewsmallsubspaces}, at least $\varepsilon|U_i|$ of such elements $x \in U_i$ also have the property that $\on{codim}_{V_i} A_{x \bcdot} \cap V_i \leq d_0-i$. Note that 
\begin{align*}\on{codim}_{V_i}( A_{a \bcdot} \cap A_{x \bcdot} \cap V_i) = &\dim V_i - \dim (A_{a \bcdot} \cap A_{x \bcdot} \cap V_i)\\
 = &\dim V_i - \dim (A_{a \bcdot} \cap V_i)  - \dim (A_{x \bcdot} \cap V_i)  + \dim \Big( (A_{a \bcdot} \cap V_i) + (A_{x \bcdot} \cap V_i) \Big)\\
= & \on{codim}_{V_i}A_{a \bcdot} + \on{codim}_{V_i}A_{x \bcdot} - \dim V_i + \dim \Big( (A_{a \bcdot} \cap V_i) + (A_{x \bcdot} \cap V_i) \Big)\\
\leq &\on{codim}_{V_i}A_{a \bcdot} + \on{codim}_{V_i}A_{x \bcdot} \leq 2(d_0 - i),\end{align*}
where we used $(A_{a \bcdot} \cap V_i) + (A_{x \bcdot} \cap V_i) \subseteq V_i$ in the inequality between the last two lines. Since $(a,x)$ is irregular, we have $\on{codim}_{V_i}( A_{a \bcdot} \cap A_{x \bcdot} \cap V_i) < 2(d_0-i)$. Setting $V_{i+1} = V_i \cap A_{a \bcdot}$, for such an $x$ we have \begin{align*}\on{codim}_{V_{i+1}} (A_{x \bcdot} \cap V_{i+1}) =& \dim(V_{i+1}) - \dim(A_{x \bcdot} \cap V_{i+1}) = \dim(V_i \cap A_{a \bcdot}) - \dim( V_i \cap A_{x \bcdot} \cap A_{a \bcdot}) \\
= &\Big(\dim V_i - \on{codim}_{V_i} (A_{a \bcdot} \cap V_i)\Big) - \Big(\dim V_i - \on{codim}_{V_i}( A_{a \bcdot} \cap A_{x \bcdot} \cap V_i)\Big) \\
= &\on{codim}_{V_i}( A_{a \bcdot} \cap A_{x \bcdot} \cap V_i) - \on{codim}_{V_i} (A_{a \bcdot} \cap V_i) \leq d_0 - i - 1.\end{align*}
We may now repeat the same final step of the previous case, with the same choice of $r = \lceil2\log_p(4 \varepsilon^{-1}) \rceil$. Take $y_1, \dots, y_{r}$ uniformly and independently at random from $V_{i+1}$ and let $U_{i+1} = U_i \cap A_{\bcdot y_1} \cap \dots\cap A_{\bcdot y_{r}}$. By the calculations in the previous step, there exists a choice of $y_1, \dots, y_r$ such that $\on{codim}_{V_{i+1}} (A_{x \bcdot} \cap V_{i+1}) \leq d_0 - i - 1$ for at least $\Big(1 - \frac{1}{2} \varepsilon\Big)|U_{i+1}|$ elements $x \in U_{i+1}$. In this case, the codimension bounds are $\on{codim}_{U_i} U_{i+1} \leq r d_0 = O(d_0 \log_p(2 \varepsilon^{-1}))$ and $\on{codim}_{V_i} V_{i+1} \leq d_0$.\end{proof}

We are now in position to prove Theorem~\ref{bilinearSetsStructure}.

\begin{proof}[Proof of Theorem~\ref{bilinearSetsStructure}]Let $A$ be the given transverse set of density $\delta$. Apply Lemma~\ref{reglemma} with parameter $\varepsilon = 2^{-160} p^{-32 \lceil \log_p(10^4 \delta^{-4})\rceil}$ to get a positive integer $d \leq \lceil \log_p(10^4 \delta^{-4})\rceil$ and subspaces $U \leq G$ and $V \leq H$ such that:
\begin{itemize}
\item[\textbf{(i)}] $\on{codim}_G U \leq O(\log^3_p(2 \delta^{-1}))$ and $\on{codim}_H V \leq O(\log^2_p(2 \delta^{-1}))$,
\item[\textbf{(ii)}] for at least $(1 - \varepsilon/3)|U|$ of elements $x \in U$ we have $|A_{x \bcdot} \cap V| = p^{-d} |V|$, and
\item[\textbf{(iii)}] for at least $(1 - \varepsilon/3)|U|^2$ of pairs $(x_1, x_2) \in U^2$ we have $|A_{x_1 \bcdot} \cap A_{x_2 \bcdot} \cap V| = p^{-2d} |V|$.
\end{itemize}

Note that $\varepsilon \leq  2^{-160} p^{-32 d} < 10^{-32} p^{-8d}$. Fix an inner product $\cdot$ on $V$ and, for each $x \in U$, define subspace $V_x = (A_{x \bcdot} \cap V)^\perp$, where ${}^\perp$ indicates orthogonal complement in $V$ with respect to $\cdot$. As $A$ is a transverse set, the collection $(V_x)_{x \in U}$ is a linear system of subspaces. By properties \textbf{(ii)} and \textbf{(iii)} we have that $|V_x| = p^d$ holds for at least $(1 - \varepsilon/3)|U|$ of elements $x \in U$, and three equalities $|A_{x_1 \bcdot} \cap V| = p^{-d} |V|$, $|A_{x_2 \bcdot} \cap V| = p^{-d} |V|$ and $|A_{x_1 \bcdot} \cap A_{x_2 \bcdot} \cap V| = p^{-2d} |V|$ hold simultaneously for at least $(1 - \varepsilon)|U|^2$ of pairs $(x_1, x_2) \in U^2$. These three equalities together imply that 
\[|V_{x_1} \cap V_{x_2}| = \frac{|V_{x_1}| |V_{x_2}|}{|V_{x_1} + V_{x_2}|} = \frac{\frac{|V|}{|A_{x_1 \bcdot} \cap V|} \frac{|V|}{|A_{x_2 \bcdot} \cap V|}}{\frac{|V|}{|(V_{x_1} + V_{x_2})^\perp|}} = \frac{|V| |(V_{x_1} + V_{x_2})^\perp|}{p^{-2d} |V|^2} = \frac{|V_{x_1}^\perp \cap V_{x_2}^\perp|}{p^{-2d} |V|} = \frac{|A_{x_1 \bcdot}\cap A_{x_2 \bcdot} \cap V|}{p^{-2d} |V|} = 1,\]
hence $V_{x_1} \cap V_{x_2} = \{0\}$.\\

We may apply Proposition~\ref{propfindingbilstruct} to linear system of subspaces $(V_x)_{x \in U}$ inside $V$ to find a bilinear map $\Phi \colon U \times \mathbb{F}_p^d \to V$ and a linear map $\Psi \colon \mathbb{F}_p^d \to V$ such that $\{\Phi(x, \lambda) + \Psi(\lambda) \colon \lambda \in \mathbb{F}_p^d\} = V_x$ holds for at least $(1 - 200p^d \sqrt[16]{\varepsilon})|G|$ elements $x \in U$. By our choice of $\varepsilon$, we have $200p^d \sqrt[16]{\varepsilon} \leq \frac{1}{3} p^{-d}$. Hence, there exists a set $X \subseteq U$, of size $|X| \geq (1 - p^{-d}/3) |U|$, such that $\{\Phi(x, \lambda) + \Psi(\lambda) \colon \lambda \in \mathbb{F}_p^d\} = (A_{x \bcdot} \cap V)^\perp$ holds for all $x \in X$.\\
\indent Taking orthogonal complements in $V$, we get $\{\Phi(x, \lambda) + \Psi(\lambda) \colon \lambda \in \mathbb{F}_p^d\}^\perp = A_{x \bcdot} \cap V$. Let $V' = V \cap (\on{Im} \Psi)^\perp$, and let $\beta_i(x,y) = \Phi(x, e_i) \cdot y$ for $i \in [d]$, where $e_1, \dots, e_d$ is the standard basis of $\mathbb{F}_p^d$. Then for each $x \in X$, we have $\{y \in V' \colon \beta(x,y) = 0\} \subseteq A_{x \bcdot}$. We claim that in fact
\[\{(x,y) \in U \times V' \colon \beta(x,y) = 0\} \subseteq A.\]
To see this, take any $(x,y) \in U \times V'$ such that $\beta(x,y) = 0$. The subspace $\{u \in U \colon \beta(u,y) = 0\}$ has codimension at most $d$ inside $U$. As $|X| \geq (1 - p^{-d}/3) |U|$, we conclude that $|X \cap \{u \in U \colon \beta(u,y) = 0\}| \geq \frac{2}{3} |\{u \in U \colon \beta(u,y) = 0\}|$. Thus, as $\beta(x,y) = 0$, we have that $(X \cap \{u \in U \colon \beta(u,y) = 0\}) + x \subseteq \{u \in U \colon \beta(u,y) = 0\}$ as well, so $(X \cap \{u \in U \colon \beta(u,y) = 0\}) + x$ and $(X \cap \{u \in U \colon \beta(u,y) = 0\})$ intersect, at some element $u$, say. Thus, $u, u -x \in X$ and $\beta(u, y) = \beta(u - x, y) = 0$. By properties of elements in $X$, we have that $(u, y), (u - x, y) \in A$. But $A$ is a transverse set, so $(x,y) \in A$, as desired.\\
\indent Finally, let us remark that terms of the form $\log_p(2\delta^{-1})$ in the bounds above can be replaced by more esthetically pleasing $\log_p(\delta^{-1})$. This stems from the fact that if the density $\delta$ is at least $\frac{15}{16}$, then our transverse set $A$ is in fact the full product $G \times H$. We leave this simple observation as an exercise to the reader.\end{proof}

\thebibliography{99}
\bibitem{BienvenuLe} P.-Y. Bienvenu and T.H. L\^{e}, \emph{A bilinear Bogolyubov theorem}, European J. Combin. \textbf{77} (2019), 102--113.
\bibitem{Mobius} P.-Y. Bienvenu and T.H. L\^{e}, \emph{Linear and quadratic uniformity of the M\"obius function over $\mathbb{F}_{q}[t]$}, Mathematika \textbf{65} (2019), 505--529.
\bibitem{TransverseSets} P.-Y. Bienvenu, D. Gonz\'alez-S\'anchez and \'A.D. Mart\'inez, \emph{A note on the bilinear Bogolyubov theorem: transverse and bilinear sets}, Proc. Amer. Math. Soc. \textbf{148} (2020), 23--31.
\bibitem{BogolyubovPaper} N. Bogolio\`uboff, \emph{Sur quelques propri\'et\'es arithm\'etiques des presque-p\'eriodes}, Ann. Chaire Phys. Math. Kiev \textbf{4} (1939), 185--205. 
\bibitem{BilinearBog} W.T. Gowers and L. Mili\'cevi\'c, \emph{A bilinear version of Bogolyubov's theorem}, Proc. Amer. Math. Soc. \textbf{148} (2020), 4695--4704.
\bibitem{U4paper} W.T. Gowers and L. Mili\'cevi\'c, \emph{A quantitative inverse theorem for the $\mathsf{U}^4$ norm over finite fields}, arXiv preprint (2017), \verb+arXiv:1712.00241+.
\bibitem{Multihom} W.T. Gowers and L. Mili\'cevi\'c, \emph{An inverse theorem for Freiman multi-homomorphisms}, arXiv preprint (2020), \verb+arXiv:2002.11667+.
\bibitem{HosseiniLovett} K. Hosseini and S. Lovett, \emph{A bilinear Bogolyubov-Ruzsa lemma with poly-logarithmic bounds}, Discrete Anal., paper no. 10 (2019), 1--14.
\bibitem{AQV} L. Mili\'cevi\'c, \emph{Approximate quadratic varieties}, arXiv preprint (2023), \verb+arXiv:2308.12881+.
\end{document}